\newtheorem{thm}{Theorem}[section]
\newtheorem{prop}[thm]{Proposition}
\newtheorem{cor}[thm]{Corollary}
\newtheorem{lem}[thm]{Lemma}
\newtheorem{defn}[thm]{Definition}
\numberwithin{equation}{section} 
\begin{document}

\title[On oscillatory integrals with degenerate phase functions]
{On oscillatory integrals associated to phase functions with degenerate singular points} 

\author{Toshio NAGANO and Naoya MIYAZAKI} 

\address{Department of Liberal Arts, Faculty of Science and Technology, Tokyo University of Science, 2641, Yamazaki, Noda, Chiba 278-8510, JAPAN} 
\email{tonagan@rs.tus.ac.jp}

\address{Department of Mathematics, Faculty of Economics, Keio University, Yokohama, 223-8521, JAPAN} 
\email{miyazaki@a6.keio.jp}

\thanks{The first author was supported by Tokyo University of Science Graduate School doctoral program scholarship 
and an exemption of the cost of equipment from 2016 to 2018 
and would like to thank to Professor Minoru Ito for 
giving me an opportunity of studies and preparing the environment.} 

\keywords{oscillatory integral, asymptotic expansion, stationary phase method} 

\subjclass[2010]{Primary 42B20 ; Secondary 41A60, 58K05} 

\date {September 20, 2020} 

\begin{abstract} 
In this note, by using the result in one variable, 
we obtain asymptotic expansions of oscillatory integrals for certain multivariable phase functions with {\bf degenerate} singular points. 
Moreover by using this result, we have asymptotic expansions of oscillatory integrals with phase function of type $A_{k}$, $E_6$, $E_8$-function germs. 
\end{abstract} 

\maketitle 

\section{Introduction} 

We study asymptotic expansions of oscillatory integrals: 
\begin{align} 
Os\text{-}\int_{\mathbb{R}^{n}} e^{i \lambda \phi(x)} a (x) dx 
:= \lim_{\varepsilon \to +0} \int_{\mathbb{R}^{n}} 
e^{i \lambda \phi(x)} a (x) \chi (\varepsilon x) dx, 
\notag 
\end{align} 
as a positive real parameter $\lambda \to \infty$, where $\chi \in \mathcal{S}(\mathbb{R}^{n})$ with $\chi(0) = 1$ and $0 < \varepsilon < 1$. 

As to a method of asymptotic expansions of oscillatory integrals for phase functions with a {\bf non-degenerate} singular point, 
the stationary phase method is known (\cite{Hormander01}, \cite{Hormander02}, \cite{Duistermaat01}, \cite{Fujiwara1}). 

We are interested in asymptotic expansions of oscillatory integrals for phase functions with a {\bf degenerate} singular point expressed by real polynomials $\sum_{j=1}^{n} \pm x^{m_{j}}$, 
and for amplitude functions belonging to the class $\mathcal{A}^{\tau}_{\delta}(\mathbb{R}^{n})$ (Definision \ref{A_tau_delta}). 

In \cite{Nagano-Miyazaki02}, we obtained the following result in one variable, for a phase function $\phi(x) = \pm x^{m}$ where $m \in \mathbb{N}$ (Theorem \ref{th02}): for any $N > m$, as $\lambda \to \infty$, 
\begin{align} 
Os\text{-}\int_{-\infty}^{\infty} e^{\pm i \lambda x^{m}} a (x) dx 
= \sum_{k=0}^{N-m-1} c_{k}^{\pm} \frac{a^{(k)}(0)}{k!} \lambda ^{-\frac{k+1}{m}} + O\left( \lambda ^{-\frac{N-m+1}{m}} \right), 
\notag 
\end{align} 
where $c_{k}^{\pm}$ is given by \eqref{c_k^pm}. 

In this note, by using this result, we obtained the following result in multivariable, for a phase function $\phi(x) = \sum_{j=1}^{n} \pm_{j} x^{m_{j}}$ where $m_{j} \in \mathbb{N}$ such that $m_{1} \geq \cdots \geq m_{n} \geq 2$ and $\pm_{j}$ stands for ``$+$" or ``$-$" determined by $j$ (Theorem \ref{multivariable}): for any $N_{1} \in \mathbb{N}$ such that $N_{1} > m_{1}$, as $\lambda \to \infty$, 
\begin{align} 
&Os\text{-}\int_{\mathbb{R}^{n}} e^{i \lambda \sum_{j=1}^{n} \pm_{j} x^{m_{j}}} a (x) dx 
= \sum_{\alpha \in \Omega} c_{\alpha} \lambda^{-\sum_{j=1}^{n} \frac{\alpha_{j}+1}{m_{j}}} + O \left( \lambda^{-\frac{N_{1}-m_{1}+1}{m_{1}}} \right), 
\notag 
\end{align} 
where $\Omega$ and $c_{\alpha}$ are given by \eqref{Omega_k} and \eqref{c_alpha}. 

Moreover by using our result, 
in the cases of the phase function $\phi$ is type $A_{k}$, $E_6$, $E_8$-function germ where $k \in \mathbb{N}$ (\cite{Arnold01}, \cite{AGV}): 
\begin{align} 
&A_{k} : \pm x_{1}^{k+1}+x_{2}^{2}+x_{3}^{2}, & 
&E_{6} : \pm x_{1}^{4}+x_{2}^{3}+x_{3}^{2}, & 
&E_{8} : x_{1}^{5}+x_{2}^{3} + x_{3}^{2}, 
\notag 
\end{align} 
we can obtain asymptotic expansions of oscillatory integrals (Corollary \ref{A_E_6_E_8}): 
for any $N \in \mathbb{N}$ such that $N > k+1$, $N > 4$ and $N > 5$ respectively, as $\lambda \to \infty$, 
\begin{align} 
Os\text{-} \int_{\mathbb{R}^{3}} e^{i\lambda (\pm x_{1}^{k+1}+x_{2}^{2}+x_{3}^{2})} a(x) dx 
&= \sum_{\alpha \in \Omega_{1}} c_{\alpha} \lambda^{-\frac{\alpha_{1}+1}{k+1}-\frac{\alpha_{2}+1}{2}-\frac{\alpha_{3}+1}{2}} + O \left( \lambda^{-\frac{N-k}{k+1}} \right), \notag \\ 
Os\text{-} \int_{\mathbb{R}^{3}} e^{i\lambda (\pm x_{1}^{4}+x_{2}^{3}+x_{3}^{2})} a(x) dx 
&= \sum_{\alpha \in \Omega_{2}} c_{\alpha} \lambda^{-\frac{\alpha_{1}+1}{4}-\frac{\alpha_{2}+1}{3}-\frac{\alpha_{3}+1}{2}} + O \left( \lambda^{-\frac{N-3}{4}} \right), \notag \\ 
Os\text{-} \int_{\mathbb{R}^{3}} e^{i\lambda (x_{1}^{5}+x_{2}^{3}+x_{3}^{2})} a(x) dx 
&= \sum_{\alpha \in \Omega_{3}} c_{\alpha} \lambda^{-\frac{\alpha_{1}+1}{5}-\frac{\alpha_{2}+1}{3}-\frac{\alpha_{3}+1}{2}} + O \left( \lambda^{-\frac{N-4}{5}} \right). 
\notag 
\end{align} 

As to a study related to this result, there is a \cite{Duistermaat02}. 

To the end of \S1, we remark 
notation which will be used in this letter: 

$\alpha = (\alpha_{1},\dots,\alpha_{n}) \in \mathbb{Z}_{\geq 0}^{n}$ 
is a multi-index with a length 
$| \alpha | = \alpha_{1} + \cdots + \alpha_{n}$, 
and then, we use 
$x^{\alpha} = x_{1}^{\alpha_{1}} \cdots x_{n}^{\alpha_{n}}$, 
$\alpha! = \alpha_{1}! \cdots \alpha_{n}!$, 
$\partial_{x}^{\alpha} 
= \partial_{x_{1}}^{\alpha_{1}} \cdots \partial_{x_{n}}^{\alpha_{n}}$, 
$D_{x}^{\alpha} = D_{x_{1}}^{\alpha_{1}} \cdots D_{x_{n}}^{\alpha_{n}}$ 
and $\tilde{D}_{x}^{\alpha} = \tilde{D}_{x_{1}}^{\alpha_{1}} \cdots \tilde{D}_{x_{n}}^{\alpha_{n}}$, 
where 
$\partial_{x_{j}} = \frac{\partial}{\partial x_{j}}$, 
$D_{x_{j}} = i^{-1} \partial_{x_{j}}$ 
and $\tilde{D}_{x_{j}} = \lambda^{-1} D_{x_{j}}$ 
for $x = (x_{1}, \dots, x_{n})$ and $\lambda > 0$. 

$C^{\infty}(\mathbb{R}^{n})$ is 
the set of complex-valued functions of class $C^{\infty}$ on $\mathbb{R}^{n}$. 
$C^{\infty}_{0}(\mathbb{R}^{n})$ is 
the set of all $f \in C^{\infty}(\mathbb{R}^{n})$ with compact support. 
$\mathcal{S}(\mathbb{R}^{n})$ is the Schwartz space of rapidly decreasing functions of class $C^{\infty}$ on $\mathbb{R}^{n}$, 
that is, the Fr\'{e}chet space of all $f \in C^{\infty}(\mathbb{R}^{n})$ 
such that $\max_{k+|\alpha| \leq m} \sup_{x \in \mathbb{R}^{n}} \langle x \rangle^{k} | \partial_{x}^{\alpha} f (x) | < \infty$ for any $m \in \mathbb{Z}_{\geq 0}$, where $\langle x \rangle := (1+|x|^{2})^{1/2}$. 

$[x]$ is the Gauss' symbol for $x \in \mathbb{R}$, that is, $[x] \in \mathbb{Z}$ such that $x-1 < [x] \leq x$. 

$O$ means the Landau's symbol, that is, 
$f(x) = O(g(x))~(x \to a)$ if $|f(x)/g(x)|$ 
is bounded as $x \to a$ for functions $f$ and $g$, where $a \in \mathbb{R} \cup \{ \pm \infty \}$. 

$\delta_{ij}$ is the Kronecker's delta, that is, $\delta_{ii} = 1$, and $\delta_{ij} = 0$ if $i \ne j$. 

$\tau^{+} := \max \{ \tau,0 \}$ for $\tau \in \mathbb{R}$. 

\section{Preliminaries}

In this section, 
we shall recall asymptotic expansions of oscillatory integrals and the result in one variable. 
First we recall the oscillatory integrals. 
\begin{defn} 
Let $\lambda > 0$ 
and let $\phi$ be a real-valued function of class $C^{\infty}$ on $\mathbb{R}^{n}$ 
and $a \in C^{\infty}(\mathbb{R}^{n})$. 
If there exists the following limit of improper integral: 
\begin{align} 
\tilde{I}_{\phi}[a](\lambda) 
:= Os\text{-}\int_{\mathbb{R}^{n}} e^{i \lambda \phi(x)} a (x) dx 
:= \lim_{\varepsilon \to +0} \int_{\mathbb{R}^{n}} 
e^{i \lambda \phi(x)} a (x) \chi (\varepsilon x) dx \notag 
\end{align} 
independent of $\chi \in \mathcal{S}(\mathbb{R}^{n})$ with $\chi(0) = 1$ and 
$0 < \varepsilon < 1$, 
then we call $\tilde{I}_{\phi}[a](\lambda)$ an oscillatory integral 
where we call $\phi$ 
$($resp. $a$$)$ a phase function $($resp. an amplitude function$)$. 
\end{defn} 

If we suppose certain suitable conditions for $\phi$ and $a$, 
then we can show $\tilde{I}_{\phi}[a](\lambda)$ exists independent of 
$\chi$ and $\varepsilon$ (Theorem \ref{Lax02} (ii)). 
The fundamental properties are the following (cf. \cite{Kumano-go} p.47.): 
\begin{prop} 
\label{chi epsilon} 
Let $\chi \in \mathcal{S}(\mathbb{R}^{n})$ with $\chi(0) = 1$ and $\langle x \rangle := (1+|x|^{2})^{1/2}$. 
Then 
\begin{enumerate} 
\item[(i)] 
$\chi(\varepsilon x) \to 1$ uniformly on any compact set in $\mathbb{R}^{n}$ as $\varepsilon \to +0$. 
\item[(ii)] 
For each multi-index $\alpha \in \mathbb{Z}_{\geq 0}^{n}$, 
there exists a positive constant $C_{\alpha}$ independent of $0 < \varepsilon < 1$ 
such that 
for any $x \in \mathbb{R}^{n}$ 
\begin{align} 
| \partial_{x}^{\alpha} (\chi(\varepsilon x)) | \leq C_{\alpha} \langle x \rangle^{-|\alpha|}. 
\notag 
\end{align} 
\item[(iii)] 
For any multi-index $\alpha \in \mathbb{Z}_{\geq 0}^{n}$ with $\alpha \ne 0$, 
$\partial_{x}^{\alpha} \chi(\varepsilon x) \to 0$ uniformly in $\mathbb{R}^{n}$ as $\varepsilon \to +0$. 
\end{enumerate} 
\end{prop}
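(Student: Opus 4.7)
The plan is to prove all three assertions by reducing them to standard properties of Schwartz functions, the key observation being the chain-rule identity
\begin{equation}
\partial_x^\alpha \bigl(\chi(\varepsilon x)\bigr) = \varepsilon^{|\alpha|} (\partial^\alpha \chi)(\varepsilon x), \notag
\end{equation}
which will be applied in (ii) and (iii).

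For (i), I would fix a compact set $K \subset \mathbb{R}^n$ and set $R := \sup_{x \in K} |x|$. For $0 < \varepsilon < 1$, the set $\{\varepsilon x : x \in K\}$ is contained in the closed ball $\{|y| \leq R\}$. Since $\chi$ is continuous, it is uniformly continuous on that ball, and $\chi(0) = 1$ gives $\chi(\varepsilon x) \to 1$ uniformly on $K$ as $\varepsilon \to +0$.

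For (ii), since $\partial^\alpha \chi \in \mathcal{S}(\mathbb{R}^n)$, there is a constant $M_\alpha > 0$ with $\langle y \rangle^{|\alpha|} |(\partial^\alpha \chi)(y)| \leq M_\alpha$ for all $y \in \mathbb{R}^n$. Combined with the chain-rule identity, this gives
\begin{equation}
|\partial_x^\alpha (\chi(\varepsilon x))| \leq M_\alpha \, \varepsilon^{|\alpha|} \langle \varepsilon x \rangle^{-|\alpha|}. \notag
\end{equation}
The main (very small) obstacle is then the weight comparison $\varepsilon \langle x \rangle \leq \langle \varepsilon x \rangle$ for $0 < \varepsilon < 1$, which I would verify by squaring:
\begin{equation}
\varepsilon^2 \langle x \rangle^2 = \varepsilon^2 + \varepsilon^2 |x|^2 \leq 1 + \varepsilon^2 |x|^2 = \langle \varepsilon x \rangle^2. \notag
\end{equation}
Raising to the $|\alpha|$-th power and substituting yields $|\partial_x^\alpha (\chi(\varepsilon x))| \leq M_\alpha \langle x \rangle^{-|\alpha|}$, so one may take $C_\alpha = M_\alpha$.

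For (iii), the same chain-rule identity together with the boundedness of $\partial^\alpha \chi$ (which follows from $\partial^\alpha \chi \in \mathcal{S}(\mathbb{R}^n)$) gives the uniform estimate $|\partial_x^\alpha (\chi(\varepsilon x))| \leq \varepsilon^{|\alpha|} \sup_{y \in \mathbb{R}^n} |(\partial^\alpha \chi)(y)|$, which tends to $0$ as $\varepsilon \to +0$ whenever $|\alpha| \geq 1$. Overall the proposition is entirely routine; no step should present a genuine obstacle beyond the weight-function comparison cited above.
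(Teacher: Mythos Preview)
Your proof is correct and follows essentially the same approach as the paper: the chain-rule identity $\partial_x^\alpha(\chi(\varepsilon x)) = \varepsilon^{|\alpha|}(\partial^\alpha\chi)(\varepsilon x)$ together with the Schwartz seminorms of $\chi$. The only notable differences are cosmetic: for (i) the paper uses a first-order Taylor expansion to get an explicit bound $|\chi(\varepsilon x)-1|\le C_K\varepsilon$ rather than invoking uniform continuity, and for (ii) the paper splits into the cases $|x|\le 1$ and $|x|>1$ instead of using your single inequality $\varepsilon\langle x\rangle\le\langle\varepsilon x\rangle$; your treatment of (ii) is in fact a bit cleaner since it avoids the case distinction entirely.
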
 

\begin{proof} 
(i) 
Let $K$ be an any compact set in $\mathbb{R}^{n}$. 
Since $\chi \in \mathcal{S}(\mathbb{R}^{n})$, 
put $C_{K} := n \sup_{x \in K} |x| \cdot \sup_{x \in \mathbb{R}^{n}} | \partial_{x} \chi(x) | > 0$, 
then 
by Taylor expansion of $\chi(x)$ at $x=0$, 
for each $\varepsilon' > 0$, 
there exists $\delta := \varepsilon'/C_{K} > 0$, 
for any $\varepsilon > 0$ such that $0 < \varepsilon < \delta$ 
and for any $x \in K$, 
\begin{align} 
| \chi(\varepsilon x) - 1 | 
\leq \sum_{k=1}^{n} \varepsilon |x| \int_{0}^{1} | \partial_{x_{k}} \chi(\theta \varepsilon x) | d\theta 
\leq C_{K} \varepsilon 
< C_{K} \delta 
= C_{K} \cdot \frac{\varepsilon'}{C_{K}} 
= \varepsilon'. 
\notag 
\end{align} 
Therefore $\chi(\varepsilon x) \to 1$ uniformly on any compact set $K$ in $\mathbb{R}^{n}$ as $\varepsilon \to +0$. 

(ii) 
For any multi-index $\alpha \in \mathbb{Z}_{\geq 0}^{n}$, 
\begin{align} 
\partial_{x}^{\alpha} (\chi(\varepsilon x)) 
&= (\partial_{x}^{\alpha} \chi)(y) \big|_{y=\varepsilon x} \cdot \varepsilon^{|\alpha|} 
\label{|x| leq 1} \\ 
&= \{ |y|^{|\alpha|} (\partial_{x}^{\alpha} \chi)(y) \} \big|_{y=\varepsilon x} \cdot |x|^{-|\alpha|}. 
\label{|x| > 1} 
\end{align} 
Here 
since $\chi \in \mathcal{S}(\mathbb{R}^{n})$, 
put $C_{\alpha,k}' := \sup_{x \in \mathbb{R}^{n}} \langle x \rangle^{k} | \partial_{x}^{\alpha} \chi (x)| > 0$ 
for any $k \in \mathbb{Z}_{\geq 0}$. 

If $|x| \leq 1$, 
since $\langle x \rangle \leq \sqrt{2}$, 
then by \eqref{|x| leq 1}, there exists a positive constant $C_{\alpha} := 2^{|\alpha|/2} C_{\alpha,0}'$ independent of $0 < \varepsilon < 1$ such that 
\begin{align} 
| \partial_{x}^{\alpha} (\chi(\varepsilon x)) | 
\leq C_{\alpha,0}' \varepsilon^{|\alpha|} 
\leq C_{\alpha,0}' \{ \sqrt{2} \langle x \rangle^{-1} \}^{|\alpha|} 
= C_{\alpha} \langle x \rangle^{-|\alpha|}. 
\notag 
\end{align} 

If $|x| > 1$, 
since $\langle x \rangle \leq \sqrt{2}|x|$, 
then by \eqref{|x| > 1}, there exists a positive constant $C_{\alpha} := 2^{|\alpha|/2} C_{\alpha,|\alpha|}'$ independent of $0 < \varepsilon < 1$ such that 
\begin{align} 
| \partial_{x}^{\alpha} (\chi(\varepsilon x)) | 
\leq C_{\alpha,|\alpha|}' |x|^{-|\alpha|} 
\leq C_{\alpha,|\alpha|}' \{ \sqrt{2} \langle x \rangle^{-1} \}^{|\alpha|} 
= C_{\alpha} \langle x \rangle^{-|\alpha|}. 
\notag 
\end{align} 

(iii) 
If $\alpha \ne 0$, 
since $|\alpha| \geq 1$, 
by \eqref{|x| leq 1}, 
then for each $\varepsilon' > 0$, 
there exists $\delta := (\varepsilon'/C_{\alpha,0}')^{1/|\alpha|} > 0$, 
for any $\varepsilon > 0$ such that $\varepsilon < \delta$ 
and for any $x \in \mathbb{R}^{n}$, 
\begin{align} 
| \partial_{x}^{\alpha} (\chi(\varepsilon x)) - 0 | 
\leq C_{\alpha,0}' \varepsilon^{|\alpha|} 
< C_{\alpha,0}' \delta^{|\alpha|} 
= C_{\alpha,0}' (\varepsilon'/C_{\alpha,0}') 
= \varepsilon'. 
\notag 
\end{align} 
Therefore $\partial_{x}^{\alpha} \chi(\varepsilon x) \to 0$ uniformly in $\mathbb{R}^{n}$ as $\varepsilon \to +0$. 
\end{proof} 

In the stationary phase method, the typical case is the following: 
\begin{thm} 
\label{stationary example} 
Let $a \in \mathcal{S}(\mathbb{R}^{n})$. 
Then for any $N \in \mathbb{N}$, as $\lambda \to \infty$, 
\begin{align} 
&\int_{\mathbb{R}^{n}} e^{i\lambda \left( \sum_{j=1}^{p} x_{j}^{2} - \sum_{j=p+1}^{n} x_{j}^{2} \right)} a(x) dx \notag \\ 
&= \pi^{\frac{n}{2}} e^{i\frac{\pi}{4} \{ p-(n-p) \}} \sum_{|\beta|=0}^{N-1} \frac{(-1)^{\sum_{j=p+1}^{n} \beta_{j}} i^{|\beta|} \partial_{x}^{2\beta} a(0)}{4^{|\beta|} \beta!} \lambda^{-k-\frac{n}{2}} + O\left( \lambda^{-N-\frac{n}{2}} \right). 
\notag 
\end{align} 
\end{thm}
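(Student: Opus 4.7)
The plan is to factor the phase into a product of one-variable quadratics and apply Theorem \ref{th02} iteratively with $m=2$. First, since $a\in\mathcal{S}(\mathbb{R}^{n})$ and the oscillatory integral is independent of the choice of cutoff, I would use a product cutoff $\chi(\varepsilon x)=\prod_{j=1}^{n}\chi_{0}(\varepsilon x_{j})$ with $\chi_{0}\in\mathcal{S}(\mathbb{R})$, $\chi_{0}(0)=1$, together with the factorisation $e^{i\lambda\phi(x)}=\prod_{j=1}^{n}e^{\epsilon_{j}i\lambda x_{j}^{2}}$ where $\epsilon_{j}=+1$ for $j\le p$ and $\epsilon_{j}=-1$ for $j>p$. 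Fubini (justified by the Schwartz decay of $a$ and the uniform-in-$\varepsilon$ bound on $\chi_{0}(\varepsilon\cdot)$) then rewrites the integral as an iterated oscillatory integral in $x_{1},\dots,x_{n}$.

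Next, I apply Theorem \ref{th02} with $m=2$ to the innermost integral in $x_{n}$, treating $a(x',x_{n})$ as an amplitude in $x_{n}$ with $x'=(x_{1},\dots,x_{n-1})$ as parameters. Since $a\in\mathcal{S}(\mathbb{R}^{n})$, each derivative $\partial_{x_{n}}^{k}a$ has Schwartz seminorms in $x_{n}$ that are rapidly decreasing in $x'$, so the expansion
\begin{align*}
Os\text{-}\int_{\mathbb{R}}e^{\epsilon_{n}i\lambda x_{n}^{2}}a(x',x_{n})\,dx_{n} = \sum_{k_{n}=0}^{2N-1}c_{k_{n}}^{\epsilon_{n}}\frac{\partial_{x_{n}}^{k_{n}}a(x',0)}{k_{n}!}\lambda^{-(k_{n}+1)/2} + R_{n}(x',\lambda),
\end{align*}
with $R_{n}=O(\lambda^{-N-1/2})$, holds uniformly in $x'$. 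By parity, $c_{k}^{\pm}=0$ for odd $k$, and the classical Fresnel computation yields $c_{2j}^{\pm}=\sqrt{\pi}\,e^{\pm i\pi/4}(2j)!(\pm i)^{j}/(4^{j}j!)$. Iterating on $x_{n-1},\dots,x_{1}$ converts the main term into a sum over $\beta\in\mathbb{Z}_{\ge 0}^{n}$ with $|\beta|<N$ of $\prod_{j}c_{2\beta_{j}}^{\epsilon_{j}}\cdot\partial_{x}^{2\beta}a(0)/(2\beta)!\cdot\lambda^{-|\beta|-n/2}$. Using $\sum_{j}\epsilon_{j}=p-(n-p)=2p-n$ and $\prod_{j}\epsilon_{j}^{\beta_{j}}=(-1)^{\sum_{j>p}\beta_{j}}$, this collapses algebraically to the prefactor $\pi^{n/2}e^{i\pi(2p-n)/4}(-1)^{\sum_{j>p}\beta_{j}}i^{|\beta|}/(4^{|\beta|}\beta!)$ multiplying $\partial_{x}^{2\beta}a(0)\,\lambda^{-|\beta|-n/2}$, which matches the statement.

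The main obstacle is justifying the iteration rigorously: at each stage the $O$-constant produced by Theorem \ref{th02} must depend only on finitely many Schwartz seminorms of the partial amplitude, and these must be uniform in the remaining variables so that the induction can proceed. This is precisely where $a\in\mathcal{S}(\mathbb{R}^{n})$ is essential, as it guarantees that after integrating out $x_{n}$ both the Taylor coefficients $\partial_{x_{n}}^{k}a(\cdot,0)$ and the remainder $R_{n}(\cdot,\lambda)$ remain Schwartz in $x'$, with seminorms of the remainder of order $\lambda^{-N-1/2}$. Once this uniformity is secured, the $n$ remainder contributions combine additively and, after matching the total order of vanishing against $N+n/2$ (which determines how large a one-dimensional $N$ to choose at each stage), deliver the advertised error $O(\lambda^{-N-n/2})$.
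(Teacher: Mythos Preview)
The paper does not actually prove Theorem~\ref{stationary example}: it is stated in \S2 as the ``typical case'' of the classical stationary phase method, without proof, and only later recovered (with a slightly different remainder) as Corollary~\ref{m_j=m}(ii), by specialising the main multivariable Theorem~\ref{multivariable} to $m_{j}=2$. So there is no direct proof in the paper to compare against.

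That said, your approach is essentially the mechanism behind the paper's proof of Theorem~\ref{multivariable}, stripped down to the Schwartz/quadratic setting. The paper also factors the phase, applies Theorem~\ref{th02} to the $x_{1}$-integral, and then iterates in the remaining variables, tracking that each successive remainder stays $O(\lambda^{-(N_{1}-m_{1}+1)/m_{1}})$ via the condition~\eqref{N_k_condition}. The one structural difference is that in the general amplitude class $\mathcal{A}^{\tau}_{\delta}$ the paper must first split $a$ by cutoffs $\varphi_{k},\psi_{k}$ (Lemma~\ref{split_of_cutoff_function}) and dispose of the non-compactly-supported pieces using the $L_{s}^{*}$-integration-by-parts estimates of Theorem~\ref{Lax02}(ii); you bypass this because $a\in\mathcal{S}(\mathbb{R}^{n})$ already makes everything absolutely integrable and keeps all partial amplitudes Schwartz in the remaining variables. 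Your identification of the key uniformity issue --- that the $O$-constants from Theorem~\ref{th02} must depend only on finitely many seminorms, uniformly in the frozen variables --- is exactly the point that drives the paper's more elaborate machinery in the non-Schwartz case. Your argument is correct and is the natural simplification of the paper's route.
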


First we assume as the phase function the following: 
\begin{defn} 
Let $m_{j} \in \mathbb{N}$ such that $\mu_{n} := \min_{j=1,\dots,n} m_{j} \geq 2$. 
Then we define the phase function $\phi_{n}$ as follows: for any $x = (x_{1},\dots,x_{n}) \in \mathbb{R}^{n}$, 
\begin{align} 
\phi_{n}(x) 
:= \sum_{j=1}^{n} \pm_{j} x_{j}^{m_{j}}. 
\label{phase_function_def} 
\end{align} 
\end{defn} 

Next according to the phase function, 
we define the class of amplitude functions as follows 
(cf. \cite{Kumano-go} p.46.): 
\begin{defn} 
\label{A_tau_delta} 
Assume that $\mu_{n} = \min_{j=1,\dots,n} m_{j}$ as above. 
Let $\tau \in \mathbb{R}$ and $-1 \leq \delta < \mu_{n}-1$. 
Then the class of amplitude functions $\mathcal{A}^{\tau}_{\delta}(\mathbb{R}^{n})$ is the set of all $a \in C^{\infty}(\mathbb{R}^{n})$ such that for $l \in \mathbb{Z}_{\geq 0}$, 
\begin{align} 
|a|^{(\tau)}_{l} 
:= \max_{k \leq l} \sup_{x \in \mathbb{R}^{n}} |x|^{-\tau - \delta |\alpha|} |\partial_{x}^{\alpha} a(x)| < \infty. 
\label{A_tau_delta_def03} 
\end{align} 
\end{defn} 

In \cite{Nagano-Miyazaki02}, we obtained the following result in one variable. 
\begin{thm} 
\label{th02} 
Assume that $\lambda > 0$, $m \in \mathbb{N}$ and $a \in \mathcal{A}^{\tau}_{\delta}(\mathbb{R})$. 
Then for any $N \in \mathbb{N}$ such that $N > m$, 
\begin{align} 
Os\text{-}\int_{-\infty}^{\infty} e^{\pm i\lambda x^{m}} a(x) dx 
= \sum_{k=0}^{N-m-1} c_{k}^{\pm} \frac{a^{(k)}(0)}{k!} \lambda ^{-\frac{k+1}{m}} + R_{N}^{\pm}(\lambda) 
\notag 
\end{align} 
and 
\begin{align} 
R_{N}^{\pm}(\lambda) 
:&= \sum_{k=N-m}^{N-1} c_{k}^{\pm} \frac{a^{(k)}(0)}{k!} \lambda ^{-\frac{k+1}{m}} 
+ \frac{1}{N!} Os\text{-} \int_{-\infty}^{\infty} e^{\pm i\lambda x^{m}} x^{N} a^{(N)} (\theta x) dx \notag \\ 
&= O\left( \lambda^{-\frac{N-m+1}{m}} \right)~(\lambda \to \infty) , 
\notag 
\end{align} 
where 
\begin{align} 
c_{k}^{\pm} 
&:= m^{-1} \left\{ e^{\pm i\frac{\pi}{2} \frac{k+1}{m}} + (-1)^{k} e^{\pm (-1)^{m} i\frac{\pi}{2} \frac{k+1}{m}} \right\} \varGamma \left( \frac{k+1}{m} \right), 
\label{c_k^pm} 
\end{align} 
$0 < \theta < 1$ and double signs $\pm$ are in the same order. 
\end{thm}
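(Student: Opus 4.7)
The plan is to reduce the question to explicit one-variable integrals by Taylor-expanding the amplitude at the unique (degenerate) critical point $x=0$. Using the Lagrange form
\begin{align*}
a(x) = \sum_{k=0}^{N-1} \frac{a^{(k)}(0)}{k!} x^{k} + \frac{x^{N}}{N!} a^{(N)}(\theta x), \quad 0<\theta<1,
\end{align*}
and inserting into the oscillatory integral, one obtains the decomposition
\begin{align*}
\tilde{I}_{\pm x^{m}}[a](\lambda) = \sum_{k=0}^{N-1} \frac{a^{(k)}(0)}{k!} J_{k}^{\pm}(\lambda) + \frac{1}{N!} Os\text{-}\int_{-\infty}^{\infty} e^{\pm i\lambda x^{m}} x^{N} a^{(N)}(\theta x) dx,
\end{align*}
where $J_{k}^{\pm}(\lambda) := Os\text{-}\int_{-\infty}^{\infty} e^{\pm i\lambda x^{m}} x^{k} dx$. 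The first step proper is to compute each $J_{k}^{\pm}$ exactly.

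To evaluate $J_{k}^{\pm}$ I would split $\mathbb{R}$ at the origin and substitute $t = \lambda |x|^{m}$ on each half-line, reducing matters to the Mellin--Gamma identity
\begin{align*}
\int_{0}^{\infty} e^{\pm it} t^{s-1} dt = \Gamma(s) e^{\pm i\pi s/2}, \quad 0 < \mathrm{Re}\, s < 1,
\end{align*}
justified by contour rotation after regularizing with $\chi(\varepsilon x)$ and passing to the limit via Proposition \ref{chi epsilon}. Combining the two half-line contributions, and picking up an extra factor $(-1)^{k}$ from the change of variables $x \mapsto -x$ together with a sign $(-1)^{m}$ in the phase, produces exactly $J_{k}^{\pm}(\lambda) = c_{k}^{\pm} \lambda^{-(k+1)/m}$ with $c_{k}^{\pm}$ as in \eqref{c_k^pm}. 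The terms with $k=N-m,\dots,N-1$ are then absorbed into $R_{N}^{\pm}$ and each is manifestly $O(\lambda^{-(N-m+1)/m})$ because $(k+1)/m \geq (N-m+1)/m$ in that range.

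What remains---and what I expect to be the main obstacle---is showing that the oscillatory integral of $x^{N} a^{(N)}(\theta x)$ is itself $O(\lambda^{-(N-m+1)/m})$. My plan is a two-region split at the natural scale $|x| \sim \lambda^{-1/m}$. On the inner piece $|x| \leq \lambda^{-1/m}$ a direct modulus bound, using the estimate $|a^{(N)}(\theta x)| \leq |a|^{(\tau)}_{N} |\theta x|^{\tau+\delta N}$ provided by the class $\mathcal{A}^{\tau}_{\delta}(\mathbb{R})$, yields the required decay after integrating over an interval of length $2\lambda^{-1/m}$. On the outer piece $|x| > \lambda^{-1/m}$ one integrates by parts with the transpose of the operator $L = (\pm i\lambda m x^{m-1})^{-1} \partial_{x}$, which satisfies $L(e^{\pm i\lambda x^{m}}) = e^{\pm i\lambda x^{m}}$; each application yields a factor $\lambda^{-1}$ at the cost of extra inverse powers of $|x|$ coming from the vanishing of $\phi'$ at the origin, and the assumption $\delta < m-1$ of Definition \ref{A_tau_delta} is precisely what keeps the resulting integrand integrable at infinity after sufficiently many iterations.

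A delicate technical thread running through the whole argument is the uniform control in the regularization parameter $\varepsilon$: every manipulation must first be carried out on the cut-off integral $\int e^{\pm i\lambda x^{m}} x^{N} a^{(N)}(\theta x) \chi(\varepsilon x) dx$, using Proposition \ref{chi epsilon}(ii)--(iii) to obtain bounds on $\partial_{x}^{\alpha}(\chi(\varepsilon x))$ that are independent of $\varepsilon$, before passing to $\varepsilon \to +0$ by dominated convergence. Balancing the inner and outer pieces at the critical scale $|x| \sim \lambda^{-1/m}$ is precisely what produces the sharp exponent $(N-m+1)/m$ in the error estimate.
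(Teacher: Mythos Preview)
Your overall strategy matches the paper's proof closely: Taylor-expand the amplitude at the critical point, evaluate each monomial integral $J_k^{\pm}$ by splitting at the origin and rotating the contour to the Gamma function, and control the Taylor remainder via integration by parts with $L=(\pm i\lambda m x^{m-1})^{-1}\partial_x$. The one substantive difference lies in the remainder estimate. The paper works on the half-line, introduces a \emph{fixed} cutoff $\varphi$ at $|x|\sim 1$, and invokes from \cite{Nagano-Miyazaki02} the general bound $\bigl|Os\text{-}\int_0^\infty e^{\pm i\lambda x^{p}}x^{q-1}b(x)\,dx\bigr|=O(\lambda^{-(q-p)/p})$ for $q>p$ (their \eqref{tilde_I_pq_est}), then simply applies it with $q=N+1$, $p=m$. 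Your $\lambda$-dependent split at the critical scale $|x|\sim\lambda^{-1/m}$ is a legitimate alternative that recovers the same exponent, but it carries two extra bookkeeping burdens you should flag: derivatives of the moving cutoff produce positive powers $\lambda^{k/m}$ that must be shown to be offset by the shrinking support, and on the inner region the estimate $|a^{(N)}(\theta x)|\le |a|^{(\tau)}_{N}|\theta x|^{\tau+\delta N}$ you quote is only helpful when $\tau+\delta N\ge 0$; for general $\tau,\delta$ one should instead use that $a^{(N)}$ is continuous and hence locally bounded near $0$. The paper's fixed-cutoff route sidesteps both issues at the price of isolating the decay estimate as a separate lemma.
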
 

\begin{proof} 
We obtained this theorem in \cite{Nagano-Miyazaki02} (Theorem 5.2 (ii)). 
However, we recall the proof for the readers. 

First if $p > q > 0$, then applying Cauchy's integral theorem to a homeomorphic function $e^{iz^{p}} z^{q-1}$ on the domain with 
the anticlockwise oriented boundary $\sum_{j=1}^{4} C_{j}$ defined by $C_{1} := \{ z=r \in \mathbb{C} | 0 < \varepsilon  \leq r \leq R \}$, 
$C_{2} := \{ z=R e^{i\theta} \in \mathbb{C} | 0 \leq \theta \leq \pi /2p \}$, 
$C_{3} := \{ z=-se^{i(\pi /2p)} 
\in \mathbb{C} | -R \leq s \leq -\varepsilon \}$ 
and $C_{4} := \{ z=\varepsilon  e^{-i\tau} 
\in \mathbb{C} | -\pi /2p \leq \tau \leq 0 \}$, and using $\varepsilon \to +0$ and $R \to \infty$ give 
\begin{align} 
\int_{0}^{\infty} e^{\pm ix^{p}} x^{q-1} dx 
= p^{-1} e^{\pm i\frac{\pi}{2} \frac{q}{p}} \varGamma \left( \frac{q}{p} \right). 
\label{I_pq} 
\end{align} 

Secondly if $p > 0$ and $q > 0$, then dividing $\int_{0}^{\infty} e^{\pm i \lambda x^{p}} x^{q-1} a(x) \chi (\varepsilon x) dx$ where $\chi \in \mathcal{S}(\mathbb{R})$ with $\chi(0) = 1$ and $0 < \varepsilon < 1$ by cutoff function $\varphi$, repeating integration by parts by $L := \frac{1}{px^{p-1}} \frac{1}{i\lambda} \frac{d}{dx}$ on the support of $1-\varphi$, making the order of integrand descend to be integrable in the sense of Lebesgue by $L^{*} := - \frac{1}{i\lambda} \frac{d}{dx} \frac{1}{px^{p-1}}$, and appying Lebsgue's convergence theorem with Proposition \ref{chi epsilon} (i) and (iii) give the existence of oscillatory integrals $\tilde{I}_{p,q}^{\pm}[a](\lambda) := Os\text{-}\int_{0}^{\infty} e^{\pm i\lambda x^{p}} x^{q-1} a(x) dx$. 
In particular, if $\lambda=1$ and $a \equiv 1$, then by \eqref{I_pq}, 
\begin{align} 
\tilde{I}_{p,q}^{\pm} 
:= Os\text{-}\int_{0}^{\infty} e^{\pm ix^{p}} x^{q-1} dx 
= p^{-1} e^{\pm i\frac{\pi}{2} \frac{q}{p}} 
\varGamma \left( \frac{q}{p} \right), 
\label{generalized_Fresnel_integral_def} 
\end{align} 
and if $q > p$, then 
\begin{align} 
| \tilde{I}_{p,q}^{\pm}[a](\lambda) | 
= O\left( \lambda^{-\frac{q-p}{p}} \right)~(\lambda \to \infty). 
\label{tilde_I_pq_est} 
\end{align} 

Thirdly dividing $\tilde{I}_{p,1}^{\pm}[a](\lambda) := Os\text{-}\int_{0}^{\infty} e^{\pm ix^{p}} a(x) dx$ into the principal part 
and the remainder term 
by Taylor expansion of $a(x)$ at $x=0$. 
And then applying \eqref{generalized_Fresnel_integral_def} after using change of variable $y=\lambda^{1/p}x$ in the former 
and applying \eqref{tilde_I_pq_est} in the latter give for any $N \in \mathbb{N}$ such that $N \geq p+1$, 
\begin{align} 
Os\text{-}\int_{0}^{\infty} e^{\pm i\lambda x^{p}} a(x) dx 
= \sum_{k=0}^{N-[p]-1} \tilde{I}_{p,k+1}^{\pm} \frac{a^{(k)}(0)}{k!} \lambda ^{-\frac{k+1}{p}} + \tilde{R}_{N}^{\pm}(\lambda), 
\label{tilde_I_p1_expans} 
\end{align} 
where 
\begin{align} 
\tilde{R}_{N}^{\pm}(\lambda) 
:= &\sum_{k=N-[p]}^{N-1} \tilde{I}_{p,k+1}^{\pm} \frac{a^{(k)}(0)}{k!} \lambda ^{-\frac{k+1}{p}} 
+ \frac{1}{N!} Os\text{-} \int_{0}^{\infty} e^{\pm i\lambda x^{p}} x^{N} a^{(N)} (\theta x) dx \notag \\ 
&= O\left( \lambda^{-\frac{N-m+1}{m}} \right)~(\lambda \to \infty). 
\label{R_N_01} 
\end{align} 

Finally set $p=m \in \mathbb{N}$ on \eqref{tilde_I_p1_expans} and \eqref{R_N_01}, 
and if $x<0$, then changing a variable $x = -y$ completes the proof. 
\end{proof} 

\section{Existence and estimates of oscillatory integrals} 

In this section, 
we shall show existence and estimates of oscillatory integrals used in the next section. 
First as to the phase function, we have the following: 
\begin{prop} 
\label{nabla_phi} 
Assume that $\phi_{n}$ is defined by \eqref{phase_function_def}. 
Then there exists a positive constant $C$ such that for any $|x_{j}| \geq 1$ for $j=1,\dots,n$, 
\begin{align} 
C |x|^{\mu_{n}-1} \leq |\nabla \phi_{n} (x)|. 
\notag 
\end{align} 
\end{prop}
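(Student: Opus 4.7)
The plan is to expand $\nabla\phi_n$ coordinate by coordinate, use the hypothesis $|x_j|\geq 1$ to replace each exponent $m_j-1$ by the smaller common exponent $\mu_n-1$, and then pass from a sum of powers to a power of a sum via a convexity (power-mean) inequality.

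First I would compute, from \eqref{phase_function_def},
\begin{align}
|\nabla\phi_n(x)|^2 = \sum_{j=1}^{n} m_j^2\, x_j^{2(m_j-1)}. \notag
\end{align}
Since $m_j\geq \mu_n$ and $|x_j|\geq 1$, the elementary inequality $|x_j|^{2(m_j-1)}\geq |x_j|^{2(\mu_n-1)}$ holds termwise; combined with $m_j^2\geq \mu_n^2\geq 4$, this yields
\begin{align}
|\nabla\phi_n(x)|^2 \geq \mu_n^2 \sum_{j=1}^{n} x_j^{2(\mu_n-1)}. \notag
\end{align}

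Next I would compare $\sum_j x_j^{2(\mu_n-1)}$ with $|x|^{2(\mu_n-1)}=\bigl(\sum_j x_j^2\bigr)^{\mu_n-1}$. Writing $a_j:=x_j^2\geq 0$ and $p:=\mu_n-1\geq 1$, the power-mean (or Jensen) inequality $\left(\frac{1}{n}\sum_j a_j\right)^{p}\leq \frac{1}{n}\sum_j a_j^{p}$ gives
\begin{align}
\Bigl(\sum_{j=1}^{n} x_j^{2}\Bigr)^{\mu_n-1} \leq n^{\mu_n-2}\sum_{j=1}^{n} x_j^{2(\mu_n-1)}. \notag
\end{align}
Combining the last two inequalities and taking square roots then produces the desired bound with $C:=\mu_n\, n^{-(\mu_n-2)/2}>0$.

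No step looks genuinely difficult: the only place the assumption $|x_j|\geq 1$ is essential is in reducing every exponent $m_j-1$ to the uniform exponent $\mu_n-1$, since this inequality fails for $|x_j|<1$ whenever $m_j>\mu_n$. The mildly delicate point, and what I would emphasize in the write-up, is that the power-mean step is needed only because $\mu_n-1\geq 1$; if one instead had $\mu_n=1$ (excluded here since $\mu_n\geq 2$) the two quantities would differ in the opposite direction and the argument would collapse.
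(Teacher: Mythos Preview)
Your proposal is correct and follows essentially the same approach as the paper: compute $|\nabla\phi_n|^2=\sum_j m_j^2 x_j^{2(m_j-1)}$, use $|x_j|\geq 1$ to drop each exponent to $\mu_n-1$, and then compare $\sum_j x_j^{2(\mu_n-1)}$ with $\bigl(\sum_j x_j^2\bigr)^{\mu_n-1}$. The only cosmetic difference is that the paper names this last comparison H\"older's inequality (with exponents $\mu_n-1$ and its conjugate, for $\mu_n>2$), whereas you invoke the power-mean/Jensen inequality; these yield the identical bound $\bigl(\sum_j x_j^2\bigr)^{\mu_n-1}\leq n^{\mu_n-2}\sum_j x_j^{2(\mu_n-1)}$, and your formulation has the minor advantage of transparently including the case $\mu_n=2$ as an equality.
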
 

\begin{proof} 
By H\"{o}lder's inequality:$\sum_{j=1}^{n} x_{j}^{2} \cdot 1 
< ( \sum_{j=1}^{n} (x_{j}^{2})^{\mu_{n}-1} )^{1/(\mu_{n}-1)} ( \sum_{j=1}^{n} 1^{q} )^{1/q}$ where $q>1$ and $\{ 1/(\mu_{n}-1)\} + (1/q) = 1$ for $\mu_{n} > 2$. 
\end{proof} 

\begin{prop} 
\label{b_j}
Assume that $\lambda > 0$ and $\phi_{n}$ is defined by \eqref{phase_function_def}. 
Let 
\begin{align} 
L_{n} := \sum_{j=1}^{n} b_{j}(x) \tilde{D}_{x_{j}},~
b_{j}(x) := \frac{\partial_{x_{j}} \phi_{n}(x)}{|\nabla \phi_{n} (x)|^{2}}, 
\label{L def} 
\end{align} 
Then $L_{n}e^{i\lambda \phi_{n} (x)} = e^{i\lambda \phi_{n} (x)}$ and $b_{j} \in C^{\infty}(\mathbb{R}^{n} \setminus \{ 0 \})$, 
and then for each multi-index $\alpha \in \mathbb{Z}_{\geq 0}^{n}$, there exists a positive constant $C_{j,\alpha}$ such that 
for any $|x_{k}| \geq 1$ for $k=1,\dots,n$, 
\begin{align} 
| \partial_{x}^{\alpha} b_{j}(x) | \leq C_{j,\alpha} |x|^{-(\mu_{n}-1)}. 
\label{b_j est} 
\end{align} 
\end{prop}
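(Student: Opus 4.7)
The identity $L_n e^{i\lambda\phi_n}=e^{i\lambda\phi_n}$ follows immediately from the chain rule: $\tilde D_{x_j}e^{i\lambda\phi_n}=(\partial_{x_j}\phi_n)e^{i\lambda\phi_n}$, and by the very definition of $b_j$ one has $\sum_j b_j \partial_{x_j}\phi_n=|\nabla\phi_n|^{-2}\sum_j(\partial_{x_j}\phi_n)^2=1$. Smoothness of $b_j$ on $\mathbb{R}^n\setminus\{0\}$ is equally direct: writing $g(x):=|\nabla\phi_n(x)|^2=\sum_k m_k^2 x_k^{2(m_k-1)}$, both $\partial_{x_j}\phi_n$ and $g$ are polynomials, and $g$ vanishes only at the origin since each summand is nonnegative and $m_k\geq 2$.

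For the derivative estimate \eqref{b_j est}, the plan is to apply Leibniz to $b_j=(\partial_{x_j}\phi_n)\cdot g^{-1}$ and bound the two factors separately. Because $\phi_n$ is coordinate-separated, $\partial^\beta(\partial_{x_j}\phi_n)$ vanishes unless $\beta=\beta_j e_j$ with $\beta_j\leq m_j-1$, in which case it is a constant multiple of $x_j^{m_j-1-\beta_j}$; similarly $\partial^\gamma g$ vanishes unless $\gamma$ is supported in a single coordinate $k$ with $|\gamma|\leq 2(m_k-1)$, in which case it is a constant multiple of $x_k^{2(m_k-1)-|\gamma|}$. The key elementary bound is $m_k^2|x_k|^{2(m_k-1)}\leq g$ pointwise, because $g$ is a sum of nonnegative terms containing that one summand. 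Using the Fa\`a di Bruno expansion
\[
\partial^\delta(1/g)=\sum \frac{c\,\prod_{i=1}^{p}\partial^{\gamma_i}g}{g^{p+1}},\qquad \textstyle\sum_{i}\gamma_i=\delta,\ |\gamma_i|\geq 1,
\]
each nonzero summand becomes, up to a constant, $\prod_i x_{k_i}^{2(m_{k_i}-1)-|\gamma_i|}/g^{p+1}$; grouping factors by the coordinate $k_i$ in which each $\gamma_i$ is supported and absorbing $|x_k|^{2(m_k-1)}$ into $g$ one copy at a time produces $g^p$ in the numerator, collapsing the quotient to a bound of the form $C_\delta/g$ on $\{|x_k|\geq 1\ \forall k\}$. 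Combined with $|\partial^\beta(\partial_{x_j}\phi_n)|\leq C_\beta g^{1/2}$ (which follows from $m_j|x_j|^{m_j-1}\leq g^{1/2}$, after using $|x_j|\geq 1$ to discard the harmless factor $|x_j|^{-\beta_j}$), this gives $|\partial^\alpha b_j|\leq C_\alpha/g^{1/2}=C_\alpha/|\nabla\phi_n|$, and Proposition \ref{nabla_phi} converts this to $C_{j,\alpha}|x|^{-(\mu_n-1)}$.

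The main obstacle is the bookkeeping in the Fa\`a di Bruno step: for each summand, writing $S_k=\{i:\gamma_i=|\gamma_i|e_k\}$ and $\sigma_k=\sum_{i\in S_k}|\gamma_i|$, the numerator carries $\prod_k|x_k|^{|S_k|\cdot 2(m_k-1)-\sigma_k}$, and I need to verify that this exponent is nonnegative (so the hypothesis $|x_k|\geq 1$ really lets me drop the $-\sigma_k$) and that absorbing coordinate by coordinate produces exactly $g^p$ — no more, no less. The nonnegativity uses $|\gamma_i|\leq 2(m_{k_i}-1)$, which is automatic for nonvanishing terms, so this reduces to accounting rather than a genuine analytic difficulty; the rest of the argument is controlled differentiation of a rational function.
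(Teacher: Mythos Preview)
Your argument is correct. The identity and smoothness parts are trivial, and for the estimate you establish $|\partial^\delta(1/g)|\leq C_\delta/g$ and $|\partial^\beta(\partial_{x_j}\phi_n)|\leq C_\beta g^{1/2}$ on the region $\{|x_k|\geq 1\ \forall k\}$, then combine via Leibniz and invoke Proposition~\ref{nabla_phi}. The bookkeeping you flag as the ``main obstacle'' is indeed just accounting: each nonvanishing $\partial^{\gamma_i}g$ forces $|\gamma_i|\leq 2(m_{k_i}-1)$, so $\sigma_k\leq 2(m_k-1)|S_k|$ and the exponent is nonnegative; dropping it via $|x_k|\geq 1$ and absorbing $|x_k|^{2(m_k-1)}\leq g$ exactly $|S_k|$ times per coordinate yields $g^{\sum_k|S_k|}=g^p$, cancelling against $g^{p+1}$ to leave $1/g$.

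The paper proceeds differently: rather than expanding $1/g$ by Fa\`a di Bruno, it argues by induction on $|\alpha|$. One differentiates the \emph{polynomial} identity $g\cdot b_j=\partial_{x_j}\phi_n$ and solves for the top-order term,
\[
g\,\partial_x^\alpha b_j=-\sum_{0<\beta\leq\alpha}\binom{\alpha}{\beta}(\partial_x^\beta g)(\partial_x^{\alpha-\beta}b_j)+\partial_x^\alpha\partial_{x_j}\phi_n,
\]
then uses $|\partial_x^\beta g|\leq C_\beta g$, the inductive hypothesis, and $|\partial_x^\alpha\partial_{x_j}\phi_n|\leq C_\alpha g^{1/2}$ to close the estimate. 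This is the standard trick for bounding derivatives of $1/g$ whenever $g$ satisfies $|\partial^\beta g|\lesssim g$: it sidesteps the Fa\`a di Bruno combinatorics entirely and scales painlessly to higher derivatives. Your direct approach has the merit of being fully explicit and of exposing exactly where the hypothesis $|x_k|\geq 1$ enters, but the inductive route is shorter and is the one you will see more often in the microlocal literature.
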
 

\begin{proof} 
By induction on $|\alpha|$. 
If $|\alpha|=0$, by Proposition \ref{nabla_phi}, then \eqref{b_j est} holds. 
If \eqref{b_j est} holds for $|\alpha|-1$, then applying $\partial_{x}^{\alpha}$ to $|\nabla \phi_{n} (x)|^{2} b_{j}(x) = \partial_{x_{j}} \phi_{n}(x)$ and using Leibniz's formula give 
\begin{align} 
&|\nabla \phi_{n} (x)|^{2} \partial_{x}^{\alpha} b_{j}(x) 
= -\sum_{0 < \beta \leq \alpha} \binom{\alpha}{\beta} \partial_{x}^{\beta} |\nabla \phi_{n} (x)|^{2} \partial_{x}^{\alpha - \beta} b_{j}(x) + \partial_{x}^{\alpha} \partial_{x_{j}} \phi_{n} (x). 
\notag 
\end{align} 
Moreover using $\left| \partial_{x}^{\beta} |\nabla \phi_{n} (x)|^{2} \right| \leq C_{\beta} |\nabla \phi_{n} (x)|^{2}$, the hypothesis, $| \partial_{x}^{\alpha} \partial_{x_{j}} \phi_{n}(x) | \leq C_{\alpha} |\nabla \phi_{n} (x)|$ and Proposition \ref{nabla_phi} show \eqref{b_j est} for $|\alpha|$. 
\end{proof} 

By Proposition \ref{b_j}, we obtain the following:
\begin{prop} 
\label{d_j} 
Let 
\begin{align} 
L_{n}^{*} 
:&= - \sum_{j=1}^{n} \tilde{D}_{x_{j}} b_{j}(x) 
\label{L* def0}
\end{align} 
be a formal adjoint operator of $L_{n}$, and denote 
\begin{align} 
L_{n}^{*} 
= \sum_{j=1}^{n} d_{j}(x) \tilde{D}_{x_{j}} + d_{0}(x),~
d_{j}(x) := -b_{j}(x),~
d_{0}(x) := -\sum_{j=1}^{n} \tilde{D}_{x_{j}}(b_{j}(x)). 
\label{d_j def} 
\end{align} 
Then $d_{j} \in C^{\infty}(\mathbb{R}^{n} \setminus \{ 0 \})$ for $j=0,\dots,n$, and 
for each multi-index $\alpha \in \mathbb{Z}_{\geq 0}^{n}$, there exist positive constants $C_{j,\alpha}$ such that 
for any $|x_{k}| \geq 1$ for $k=1,\dots,n$, 
\begin{align} 
&| \partial_{x}^{\alpha} d_{j}(x) | \leq C_{j,\alpha} |x|^{-(\mu_{n}-1)}~\text{for $j=1,\dots,n$}, 
\label{d_j est} \\ 
&| \partial_{x}^{\alpha} d_{0}(x) | \leq C_{0,\alpha} \lambda^{-1} |x|^{-(\mu_{n}-1)}. 
\label{d_0 est} 
\end{align} 
\end{prop}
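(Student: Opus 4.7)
The plan is to derive both the operator decomposition and the estimates as essentially immediate consequences of Proposition \ref{b_j}. First I would apply the Leibniz rule to $\tilde{D}_{x_j}(b_j(x) f(x))$ for a test function $f$: since $\tilde{D}_{x_j} = \lambda^{-1} i^{-1} \partial_{x_j}$, one obtains $\tilde{D}_{x_j}(b_j f) = (\tilde{D}_{x_j} b_j) f + b_j \tilde{D}_{x_j} f$. Summing $-\tilde{D}_{x_j}(b_j \cdot)$ over $j$ then gives exactly the decomposition $L_n^* = \sum_{j=1}^n d_j(x) \tilde{D}_{x_j} + d_0(x)$ with $d_j$ and $d_0$ as in \eqref{d_j def}; in particular the existence of the formula is a one-line rearrangement and not the heart of the argument.

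For the smoothness claim, $d_j = -b_j \in C^{\infty}(\mathbb{R}^n \setminus \{0\})$ for $j = 1,\dots,n$ is immediate from Proposition \ref{b_j}. The function $d_0$ is a finite linear combination of first-order partial derivatives of the $b_j$'s, each of which is smooth away from the origin, so $d_0 \in C^{\infty}(\mathbb{R}^n \setminus \{0\})$ as well.

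For the pointwise bounds, inequality \eqref{d_j est} is just \eqref{b_j est} read off via $d_j = -b_j$, so nothing new is needed when $j \geq 1$. For \eqref{d_0 est} I would expand
\begin{align}
\partial_x^{\alpha} d_0(x) = -\lambda^{-1} i^{-1} \sum_{j=1}^{n} \partial_x^{\alpha} \partial_{x_j} b_j(x),
\notag
\end{align}
pulling the factor $\lambda^{-1}$ out of each $\tilde{D}_{x_j}$. Each summand is then controlled by applying Proposition \ref{b_j} to the multi-index $\alpha + e_j$ (where $e_j$ denotes the $j$-th standard multi-index), which yields $|\partial_x^{\alpha} \partial_{x_j} b_j(x)| \leq C_{j,\alpha + e_j} |x|^{-(\mu_n - 1)}$ on the region $|x_k| \geq 1$ for all $k$. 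Collecting the constants produces \eqref{d_0 est}.

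The argument is essentially bookkeeping stacked on top of Proposition \ref{b_j}, so I do not foresee any substantive obstacle. The only point requiring a moment of care is the extra factor $\lambda^{-1}$ in $d_0$, which arises because $d_0$ comes from $\tilde{D}_{x_j}$ acting on the coefficient $b_j$ itself rather than on the test function, and this is precisely what distinguishes \eqref{d_0 est} from \eqref{d_j est}.
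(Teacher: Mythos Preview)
Your proposal is correct and follows essentially the same approach as the paper: the paper's proof is the single sentence ``The estimates \eqref{b_j est} and \eqref{d_j def} above give \eqref{d_j est} and \eqref{d_0 est},'' and your argument simply unpacks this by reading off $d_j=-b_j$ for $j\geq 1$ and applying \eqref{b_j est} with the shifted multi-index $\alpha+e_j$ (absorbing the $\lambda^{-1}$ from $\tilde{D}_{x_j}$) for $d_0$. There is no substantive difference.
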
 

\begin{proof} 
The estimates \eqref{b_j est} and \eqref{d_j def} above give \eqref{d_j est} and \eqref{d_0 est}. 
\end{proof} 

By Proposition \ref{d_j}, we obtain the following:
\begin{prop} 
\label{d_l_alpha} 
For any $l \in \mathbb{Z}_{\geq 0}$, denote 
\begin{align} 
L_{n}^{*l} f = \sum_{|\alpha| \leq l} d_{l,\alpha}(x) \tilde{D}_{x}^{\alpha} f 
\label{L*l_def} 
\end{align} 
for $f \in C^{\infty}(\mathbb{R}^{n} \setminus \{ 0 \} )$, where $L_{n}^{*0}$ is an identity operator. 
Then 
$d_{l,\alpha} \in C^{\infty}(\mathbb{R}^{n} \setminus \{ 0 \})$ 
and for each multi-index $\beta \in \mathbb{Z}_{\geq 0}^{n}$, 
there exists a positive constant $C_{l,\beta}$ such that 
for any $|x_{k}| \geq 1$ for $k=1,\dots,n$, 
\begin{align} 
| \partial_{x}^{\beta} d_{l,\alpha}(x) | 
\leq C_{l,\beta} \lambda^{-(l-|\alpha|)} |x|^{-(\mu_{n}-1)l}. 
\label{d_l_alpha_est} 
\end{align} 
\end{prop}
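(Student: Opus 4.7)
The plan is to prove the statement by induction on $l$. The base case $l=0$ is trivial since $L_n^{*0}$ is the identity, so $d_{0,0} \equiv 1$ and the bound \eqref{d_l_alpha_est} holds automatically (with $\lambda^0|x|^0$). The case $l=1$ is exactly the content of Proposition \ref{d_j}: comparing \eqref{L*l_def} with \eqref{d_j def} gives $d_{1,e_j}=d_j$ and $d_{1,0}=d_0$, and \eqref{d_j est}--\eqref{d_0 est} immediately yield \eqref{d_l_alpha_est} with $l=1$.

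For the inductive step, assume the representation \eqref{L*l_def} and the estimate \eqref{d_l_alpha_est} for $l$. I would compute
\begin{align}
L_n^{*(l+1)} f
&= L_n^{*}\!\left( \sum_{|\alpha|\leq l} d_{l,\alpha}(x)\,\tilde D_x^{\alpha} f \right) \notag \\
&= \sum_{j=1}^{n}\sum_{|\alpha|\leq l} d_j(x)\,\tilde D_{x_j}\!\bigl(d_{l,\alpha}(x)\bigr)\,\tilde D_x^{\alpha} f
+ \sum_{j=1}^{n}\sum_{|\alpha|\leq l} d_j(x) d_{l,\alpha}(x)\,\tilde D_x^{\alpha+e_j} f \notag \\
&\quad + \sum_{|\alpha|\leq l} d_0(x) d_{l,\alpha}(x)\,\tilde D_x^{\alpha} f, \notag
\end{align}
by applying $L_n^*$ in the form \eqref{d_j def} and using the Leibniz rule for $\tilde D_{x_j}$. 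Collecting according to the exponent $\alpha'$ of $\tilde D_x$, I obtain the desired representation $L_n^{*(l+1)}f=\sum_{|\alpha'|\leq l+1} d_{l+1,\alpha'}(x)\,\tilde D_x^{\alpha'} f$ with each $d_{l+1,\alpha'}$ a finite sum of products of the three types above; smoothness on $\mathbb{R}^{n}\setminus\{0\}$ is immediate from that of $d_j$, $d_0$ and the inductive $d_{l,\alpha}$.

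To verify the estimate \eqref{d_l_alpha_est} for $l+1$, I apply $\partial_x^{\beta}$ and Leibniz's formula to each type of term, then bound each factor using Proposition \ref{d_j} together with the inductive hypothesis. The key arithmetic is that each of the three contributions picks up exactly one extra factor of $|x|^{-(\mu_n-1)}$ and adjusts the $\lambda$-power correctly: for the first type one has $|\partial^{\gamma} d_j|\lesssim|x|^{-(\mu_n-1)}$ and $\tilde D_{x_j}=\lambda^{-1}D_{x_j}$ contributes an extra $\lambda^{-1}$ against the inductive $\lambda^{-(l-|\alpha|)}$, yielding $\lambda^{-(l+1-|\alpha|)}|x|^{-(\mu_n-1)(l+1)}$ with $|\alpha|\leq l<l+1$; for the second type, setting $\alpha'=\alpha+e_j$ so $|\alpha'|=|\alpha|+1$, the $\lambda$-power becomes $\lambda^{-(l-|\alpha|)}=\lambda^{-(l+1-|\alpha'|)}$; for the third type, the extra $\lambda^{-1}$ from $d_0$ (see \eqref{d_0 est}) supplies exactly the needed factor. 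Summing the finitely many contributions gives a constant $C_{l+1,\beta}$, completing the induction.

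The only real obstacle is bookkeeping: keeping track of which combinations of $(\alpha,j)$ produce each $\alpha'$ and checking that every term respects the $\lambda$--$|x|$ balance $\lambda^{-(l+1-|\alpha'|)}|x|^{-(\mu_n-1)(l+1)}$. Because the estimates in Propositions \ref{b_j} and \ref{d_j} are uniform in the derivative order $\beta$ (the power of $|x|$ does not deteriorate under differentiation), the Leibniz expansion for $\partial_x^{\beta}$ produces only finitely many terms of exactly the same shape, so no further analytical input beyond the preceding propositions is required.
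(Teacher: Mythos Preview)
Your proposal is correct and follows essentially the same approach as the paper: both argue by induction on $l$, expand $L_n^{*}(L_n^{*l}f)$ via \eqref{d_j def} and the Leibniz rule into the same three types of terms, and then bound $\partial_x^\beta$ of the resulting coefficients using Proposition~\ref{d_j} together with the inductive hypothesis. The only cosmetic differences are that the paper writes the step as $l-1\to l$ rather than $l\to l+1$ and does not single out the case $l=1$ separately.
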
 

\begin{proof} 
By induction on $l \in \mathbb{Z}_{\geq 0}$. 
If $l=0$, since $d_{0,0} \equiv 1$, then \eqref{d_l_alpha_est} holds. 
If \eqref{d_l_alpha_est} holds for $l-1$ with $l \geq 1$, 
then for any $l \in \mathbb{N}$ and for any multi-index $\alpha \in \mathbb{Z}_{\geq 0}^{n}$ with $|\alpha| \leq l$, for any $f \in C^{\infty}(\mathbb{R}^{n} \setminus \{ 0 \} )$, 
\begin{align} 
L_{n}^{*l} f
&= \sum_{|\alpha| \leq l-1} \bigg\{ \bigg( \sum_{j=1}^{n} d_{j} \tilde{D}_{x_{j}} d_{l-1,\alpha} + d_{0} d_{l-1,\alpha} \bigg) \tilde{D}_{x}^{\alpha} f + \sum_{j=1}^{n} d_{j} d_{l-1,\alpha} \tilde{D}_{x_{j}} \tilde{D}_{x}^{\alpha} f \bigg\} 
\notag 
\end{align} 
Applying $\partial_{x}^{\beta}$ to the coefficients and using Leibniz's formula gives 
\begin{align} 
&| \partial_{x}^{\beta} d_{l,\alpha}(x) | 
\leq \sum_{|\alpha| \leq l-1} \sum_{\beta' \leq \beta} \binom{\beta}{\beta'} \notag \\ 
&\hspace{0.5cm}\times \bigg\{ \bigg| \partial_{x}^{\beta'} \bigg( \sum_{j=1}^{n} d_{j} \tilde{D}_{x_{j}} d_{l-1,\alpha} + d_{0} d_{l-1,\alpha} \bigg) \bigg| + \bigg| \partial_{x}^{\beta'} \bigg( \sum_{j=1}^{n} d_{j} d_{l-1,\alpha} \lambda^{-1} \bigg) \bigg| \bigg\}. 
\notag 
\end{align} 
Moreover using \eqref{d_j est}, \eqref{d_0 est} and the induction hypothesis shows \eqref{d_l_alpha_est} for $l$. 
\end{proof} 

We also have 
\begin{prop} 
\label{L_star_l} 
\label{multivariable} 
Assume that $\lambda > 0$. 
For any $s=1,\dots,n$, let $\phi_{s}(x') := \sum_{j=1}^{s} \pm_{j} x_{j}^{m_{j}}$, $L_{s} := \sum_{j=1}^{s} b_{j}(x') \tilde{D}_{x_{j}}$ and $L_{s}^{*} := -\sum_{j=1}^{s} \tilde{D}_{x_{j}} b_{j}(x')$ be defined by \eqref{phase_function_def}, \eqref{L def} and \eqref{L* def0}
 for $x=(x',x'') \in \mathbb{R}^{s} \times \mathbb{R}^{n-s}$ respectively. 
And let $a \in \mathcal{A}^{\tau}_{\delta}(\mathbb{R}^{n})$, 
$\varphi_{k} \in C^{\infty}_{0}(\mathbb{R})$ be a cutoff function such that $\varphi_{k} \equiv 1$ on $|x_{k}| \leq 1$ and $\varphi_{k} \equiv 0$ on $|x_{k}| \geq r > 1$, and $\psi_{k} := 1 - \varphi_{k}$ for $k=1,\cdots,n$. 
And for any $s=0,\dots,n$, let 
\begin{align} 
a_{s}(x) 
:= a(x) \prod_{k=s+1}^{n} \varphi_{k}(x_{k}) \prod_{k=1}^{s} \psi_{k}(x_{k}), 
\label{a_s_def} 
\end{align} 
where $\prod_{k=n+1}^{n} \varphi_{k} :=1$ and $\prod_{k=1}^{0} \psi_{k} := 1$. 
And let $\chi \in \mathcal{S}(\mathbb{R}^{n})$ with $\chi(0) = 1$ and $0 < \varepsilon < 1$. 
Then for any $s=1,\dots,n$, the following hold: 
\begin{enumerate} 
\item[(i)] 
For any $j = 1,\dots,s$, for any $l \in \mathbb{Z}_{\geq 0}$ and for any $h = 0,1$, let 
\begin{align} 
f_{j,l,h}(x) 
:&= (b_{j}(x') \tilde{D}_{x_{j}})^{1-h} (e^{i\lambda \phi_{s}(x')}) (-\tilde{D}_{x_{j}})^{h} (b_{j}(x')^{h} L_{s}^{*l} (a_{s}(x) \chi(\varepsilon x))), 
\notag 
\end{align} 
Then 
for any $0 < \varepsilon < 1$ and for any $l \in \mathbb{Z}_{\geq 0}$, the improper integrals 
\begin{align} 
\int_{\mathbb{R}^{n}} e^{i\lambda \phi_{n}(x)} L_{s}^{*l} (a_{s}(x) \chi(\varepsilon x)) dx~\text{and}~
\int_{\mathbb{R}^{n}} e^{i\lambda (\phi_{n}(x) - \phi_{s}(x'))} f_{j,l,h}(x) dx 
\notag 
\end{align} 
are absolutely convergent. 
\item[(ii)] 
For any $j = 1,\dots,s$, for any $(x_{1},\dots,\widehat{x_{j}},\dots,x_{n}) \in \mathbb{R}^{n-1}$, for any $l \in \mathbb{N}$ and for any $0 < \varepsilon < 1$, as $|x_{j}| \to \infty$, 
\begin{align} 
\big| b_{j}(x') (i\lambda)^{-1} e^{i\lambda \phi_{s} (x')} L_{s}^{*(l-1)} (a_{s}(x) \chi(\varepsilon x)) \big| \to 0. 
\notag 
\end{align} 
\item[(iii)] 
For any $l \in \mathbb{N}$ and for any $0 < \varepsilon < 1$, 
\begin{align} 
\int_{\mathbb{R}^{n}} e^{i\lambda \phi_{n}(x)} a_{s}(x) \chi(\varepsilon x) dx 
= \int_{\mathbb{R}^{n}} e^{i\lambda \phi_{n}(x)} L_{s}^{*l} (a_{s}(x) \chi(\varepsilon x)) dx. 
\notag 
\end{align} 
\item[(iv)] 
For each $l \in \mathbb{Z}_{\geq 0}$, 
there exists a positive constant $C_{l}$ independent of $0 < \varepsilon < 1$ such that for any $x \in \mathbb{R}^{n}$, 
\begin{align} 
|L_{s}^{*l}(a_{s}(x) \chi(\varepsilon x))| 
\leq C_{l} \lambda^{-l} \prod_{k=s+1}^{n} ||\varphi_{k}||\hspace{0.05cm}|a|^{(\tau)}_{l} |\psi_{1,s}|^{(l)}_{r} \langle x \rangle^{t}, 
\notag 
\end{align} 
where $t := \tau^{+} -(\mu_{n}-1-\delta^{+})l$, $||\varphi_{k}|| := \sup_{x_{k} \in \mathbb{R}} |\varphi_{k}(x_{k})|$ and 
\begin{align} 
|\psi_{1,s}|^{(l)}_{r} 
:&= 
\max_{|\gamma| \leq l} \sup_{\substack{|x_{k}| < r,\\k=1,\dots,n}} \langle x \rangle^{|\gamma|} \prod_{k=1}^{s} |\partial_{x_{k}}^{\gamma_{k}} \psi_{k}(x_{k})|+1. 
\label{|psi|_{l,2}} 
\end{align} 
\end{enumerate} 
\end{prop}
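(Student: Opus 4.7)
I would treat the four assertions in the order (iv), (i), (ii), (iii): (iv) is the pointwise estimate that gives all subsequent bounds uniform in $\varepsilon$, while (i)--(iii) at fixed $\varepsilon>0$ also enjoy the Schwartz decay of $\chi(\varepsilon\cdot)$.

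For (iv), I would expand $L_s^{*l}(a_s(x)\chi(\varepsilon x))=\sum_{|\alpha|\leq l}d_{l,\alpha}(x')\,\tilde{D}_{x}^{\alpha}(a_s(x)\chi(\varepsilon x))$ via Proposition \ref{d_l_alpha} (noting that $L_s^{*l}$ differentiates only in $x_1,\dots,x_s$), and then use Leibniz to distribute the derivatives among the four factors $a$, $\prod_{k\leq s}\psi_k$, $\prod_{k>s}\varphi_k$, and $\chi(\varepsilon x)$. Each factor is controlled: $|\partial^{\beta}a|\leq|a|_{l}^{(\tau)}|x|^{\tau+\delta|\beta|}$ by Definition \ref{A_tau_delta}; $|\partial^{\beta}(\chi(\varepsilon x))|\leq C_{\beta}\langle x\rangle^{-|\beta|}$ uniformly in $\varepsilon$ by Proposition \ref{chi epsilon}(ii); the product of $\psi_k$'s and its derivatives is bounded through \eqref{|psi|_{l,2}} by $|\psi_{1,s}|_{r}^{(l)}\langle x\rangle^{-|\beta|}$; and $|\prod_{k>s}\varphi_k|\leq\prod_{k>s}\|\varphi_k\|$. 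On $\mathrm{supp}\,a_s$ one has $|x_k|\geq 1$ for $k\leq s$ and $|x_k|\leq r$ for $k>s$, whence $|x|\geq 1$, $\langle x\rangle\leq\sqrt{2}\,|x|$, and $|x'|\geq C|x|$ for some $C>0$; this upgrades $|x|^{\tau+\delta|\beta|}$ to a constant multiple of $\langle x\rangle^{\tau^{+}+\delta^{+}|\beta|}$ and $|x'|^{-(\mu_n-1)l}$ to a constant multiple of $\langle x\rangle^{-(\mu_n-1)l}$. Collecting the $\lambda^{-|\alpha|}$ from $\tilde{D}^{\alpha}$ with the $\lambda^{-(l-|\alpha|)}$ from $d_{l,\alpha}$ produces the $\lambda^{-l}$ prefactor, and $|\alpha|\leq l$ together with $\delta^{+}\geq 0$ yields the exponent $t=\tau^{+}-(\mu_n-1-\delta^{+})l$ as stated.

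For (i), fix $\varepsilon>0$. Since $\chi(\varepsilon\cdot)$ and all its derivatives decay faster than any polynomial while the remaining factors in $L_s^{*l}(a_s\chi(\varepsilon x))$ grow at most polynomially on $\mathrm{supp}\,a_s$ (where $|x'|$ is bounded below, so $d_{l,\alpha}$ is smooth and polynomially controlled), the first integrand lies in $L^{1}(\mathbb{R}^{n})$; the second follows similarly, using $|e^{i\lambda(\phi_n-\phi_s)}|=1$ together with Proposition \ref{b_j} to bound $|b_j|$ and $|\tilde{D}_{x_j}b_j|$ inside $f_{j,l,h}$. For (ii), bound $|b_j(x')L_s^{*(l-1)}(a_s(x)\chi(\varepsilon x))|$ by Proposition \ref{b_j} combined with (iv) applied at level $l-1$; since $\chi(\varepsilon x)$ decays in $x_j$ faster than any polynomial while the rest grows at most polynomially, the product tends to $0$ as $|x_j|\to\infty$.

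For (iii), note that $\phi_n(x)-\phi_s(x')$ is independent of $x'$, so $L_s$ commutes with $e^{i\lambda(\phi_n-\phi_s)}$; together with $L_s e^{i\lambda\phi_s}=e^{i\lambda\phi_s}$ (from the definition of $b_j$) this gives $L_s^{l}e^{i\lambda\phi_n}=e^{i\lambda\phi_n}$. Applying Fubini to integrate first in $x_1,\dots,x_s$ at fixed $x''$ and then performing $l$ successive integrations by parts in these variables converts each $L_s$ on the exponential into an $L_s^{*}$ on the amplitude; the boundary terms at every stage are precisely of the form handled in (ii) and therefore vanish, yielding $\int e^{i\lambda\phi_n}L_s^{*l}(a_s\chi(\varepsilon x))\,dx$. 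The main technical obstacle is the exponent bookkeeping in (iv): tracking how the amplitude class $\mathcal{A}_{\delta}^{\tau}$'s growth in $|x|$ combines with the decay of $d_{l,\alpha}$ in $|x'|$ and the uniform-in-$\varepsilon$ polynomial bound on derivatives of $\chi(\varepsilon x)$, and verifying that the worst power of $\langle x\rangle$ that can arise is exactly $\tau^{+}-(\mu_n-1-\delta^{+})l$; once this is in hand, (i), (ii), (iii) follow by standard arguments.
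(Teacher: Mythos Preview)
Your proposal is correct and follows essentially the same route as the paper: the paper likewise proves (iv) by expanding $L_s^{*l}$ via the coefficients $d_{l,\alpha}$ from Proposition~\ref{d_l_alpha}, applying Leibniz to split derivatives among $a$, $\prod\psi_k$, and $\chi(\varepsilon x)$, using the support condition $|x_k|\geq 1$ for $k\leq s$, $|x_k|\leq r$ for $k>s$ to convert $|x'|^{-(\mu_s-1)l}$ into $\langle x\rangle^{-(\mu_n-1)l}$, and collecting the $\lambda$-powers; parts (i)--(iii) are handled by the Schwartz decay of $\chi(\varepsilon\cdot)$, Fubini, and integration by parts with vanishing boundary terms, exactly as you outline. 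The only cosmetic difference is that the paper presents the items in the order (i)--(iv) and phrases (i), (ii) as ``$O(|x|^{t})$ for suitable $t$'' rather than invoking (iv) explicitly, but the underlying argument is the same.
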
 

\begin{proof} 
First we show (i). 
A function $\chi \in \mathcal{S}(\mathbb{R}^{n})$ gives $L_{s}^{*l}(a_{s}(x) \chi(\varepsilon x)) = O(|x|^{t})$ $(|x| \to \infty)$ when $t < -n$ and $e^{i\lambda (\phi_{n}(x) - \phi_{s}(x'))} f_{j,l,h}(x) = O(|x|^{t})~(|x| \to \infty)$ when $t < -n$. This shows (i). 
Secondly we show (ii). 
A function $\chi \in \mathcal{S}(\mathbb{R}^{n})$ also gives $b_{j}(x') (i\lambda)^{-1} e^{i\lambda \phi_{s} (x')} L_{s}^{*(l-1)} (a_{s}(x) \chi(\varepsilon x)) = O(|x|^{t})~(|x| \to \infty)$ when $t < 0$. This shows (ii). 
Next we show (iii). 
By induction on $l \in \mathbb{N}$. Applying integration by part with (i), (ii) and Fubini's theorem show (iii). 
Finally we show (iv). 
If $x \in \mathrm{supp} L_{s}^{*l}(a_{s}(x) \chi(\varepsilon x))$, since 
\begin{align} 
\mathrm{supp} L_{s}^{*l}(a_{s}(x) \chi(\varepsilon x)) 
\subset \mathrm{supp} a_{s} 
\subset \prod_{k=1}^{s} (\mathbb{R} \setminus (-1,1)) \times \prod_{k=s+1}^{n} (-r,r), 
\notag 
\end{align} 
then $|x_{k}| \geq 1$ for $k=1,\dots,s$ and $|x_{k}| \leq r$ for $k=s+1,\dots,n$. 
And then since $|x'| \geq 1$ and $|x''| \leq (n-s)^{1/2}r$, 
then $|x'|^{-(\mu_{s}-1)l} \leq C_{n,l} \langle x \rangle^{-(\mu_{n}-1)l}$ where $C_{n,l} := \max_{s=0,\dots,n} \{ 2+2(n-s)r^{2} \}^{(\mu_{s}-1)l/2}$. 
Hence \eqref{d_l_alpha_est}, \eqref{A_tau_delta_def03}, \eqref{|psi|_{l,2}} and Proposition \ref{chi epsilon} (ii) give 
\begin{align} 
&|L_{s}^{*l}(a_{s}(x) \chi(\varepsilon x))| 
= \bigg| \prod_{k=s+1}^{n} \varphi_{k}(x_{k}) \sum_{|\alpha| \leq l} d_{l,\alpha}(x') \tilde{D}_{x'}^{\alpha} \bigg( a(x) \prod_{k=1}^{s} \psi_{k}(x_{k}) \chi(\varepsilon x) \bigg) \bigg| \notag \\ 
&\leq \prod_{k=s+1}^{n} |\varphi_{k}(x_{k})| \sum_{|\alpha| \leq l} |d_{l,\alpha}(x')| \lambda^{-|\alpha|} \notag \\ 
&\hspace{0.5cm}\times \sum_{\beta + \gamma + \zeta = \alpha} \frac{\alpha!}{\beta! \gamma! \zeta!} |\partial_{x'}^{\beta} a(x)| \bigg| \partial_{x'}^{\gamma} \prod_{k=1}^{s} \psi_{k}(x_{k}) \bigg| |\partial_{x'}^{\zeta} (\chi(\varepsilon x))| \notag \\ 
&\leq \prod_{k=s+1}^{n} ||\varphi_{k}|| \sum_{|\alpha| \leq l} C_{l,0} \lambda^{-(l-|\alpha|)} |x'|^{-(\mu_{s}-1)l} \lambda^{-|\alpha|} \notag \\ 
&\hspace{0.5cm}\times \sum_{\beta + \gamma + \zeta = \alpha} \frac{\alpha!}{\beta! \gamma! \zeta!} |a|^{(\tau)}_{l} \langle x \rangle^{\tau + \delta |\beta|} \langle x \rangle^{-|\gamma|} \langle x \rangle^{|\gamma|} \prod_{k=1}^{s} |\partial_{x_{k}}^{\gamma_{k}} \psi_{k}(x_{k})| C_{\zeta} \langle x \rangle ^{\delta |\zeta|} \notag \\ 
&\leq \lambda^{-l} \prod_{k=s+1}^{n} ||\varphi_{k}|| \sum_{|\alpha| \leq l} C_{l,0} C_{n,l} \langle x \rangle^{-(\mu_{n}-1)l} \notag \\ 
&\hspace{0.5cm}\times \sum_{\beta + \gamma + \zeta = \alpha} \frac{\alpha!}{\beta! \gamma! \zeta!} |a|^{(\tau)}_{l} \langle x \rangle^{\tau + \delta |\alpha|} |\psi_{1,s}|^{(l)}_{r} C_{\zeta}. \notag 
\end{align} 
This shows (iv). 
\end{proof} 

By Proposition \ref{L_star_l}, 
we obtain the following theorem: 
\begin{thm} 
\label{Lax02} 
Assume that $\lambda > 0$ and $s=0,\dots,n$. 
Let $\phi_{n}$, $a_{s}$ and $L_{s}^{*}$ be defined by \eqref{phase_function_def}, \eqref{a_s_def} and \eqref{L* def0} respectively. 
Then the following hold: 
\begin{enumerate} 
\item[(i)] 
There exists the following oscillatory integral, and the following holds: 
\begin{align} 
\tilde{I}_{\phi_{n}}[a_{0}](\lambda) 
:&= Os\text{-}\int_{\mathbb{R}^{n}} e^{i\lambda \phi_{n}(x)} a_{0}(x) dx 
= \int_{\mathbb{R}^{n}} e^{i\lambda \phi_{n}(x)} a_{0}(x) dx. 
\notag 
\end{align} 
\item[(ii)] 
If $s \ne 0$, 
then there exists the following oscillatory integral, 
and for any $l \in \mathbb{N}$ such that $l \geq l_{n}$, the following holds: 
\begin{align} 
\tilde{I}_{\phi_{n}}[a_{s}](\lambda) 
:&= Os\text{-}\int_{\mathbb{R}^{n}} e^{i\lambda \phi_{n}(x)} a_{s}(x) dx 
= \int_{\mathbb{R}^{n}} e^{i\lambda \phi_{n}(x)} L_{s}^{*l}(a_{s}(x)) dx, 
\notag 
\end{align} 
where $l_{n} := [ (\tau^{+}+n)/(\mu_{n}-1-\delta^{+}) ] +1$. 
And then for each $l \in \mathbb{N}$ such that $l \geq l_{n}$, there exists a positive constant $\tilde{C}_{s,l}$ such that for any $\lambda >  0$, 
\begin{align} 
| \tilde{I}_{\phi_{n}}[a_{s}](\lambda) | 
\leq \tilde{C}_{s,l} \prod_{k=s+1}^{n} ||\varphi_{k}||\hspace{0.05cm}|a|^{(\tau)}_{l_{n}} |\psi_{1,s}|^{(l)}_{r} \lambda^{-l}, 
\notag 
\end{align} 
where $||\varphi_{k}|| := \sup_{x_{k} \in \mathbb{R}} |\varphi_{k}(x_{k})|$ 
and 
$|\psi_{1,s}|^{(l)}_{r}$ is defined by \eqref{|psi|_{l,2}}. 
\end{enumerate} 
\end{thm}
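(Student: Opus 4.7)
The plan is to reduce both parts to applications of Lebesgue's dominated convergence theorem, using the integration-by-parts identity in Proposition \ref{L_star_l}(iii) to render the integrand absolutely integrable with a majorant uniform in $\varepsilon$.

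For (i), the amplitude $a_0 = a \prod_{k=1}^{n} \varphi_k$ has compact support by construction, so $|a_0(x) \chi(\varepsilon x)| \le |a_0(x)| \cdot \sup_y |\chi(y)|$ is an integrable majorant independent of $\varepsilon$. Since $\chi(\varepsilon x) \to 1$ uniformly on the compact support of $a_0$ by Proposition \ref{chi epsilon}(i), dominated convergence yields the equality $\tilde{I}_{\phi_n}[a_0](\lambda) = \int_{\mathbb{R}^n} e^{i\lambda \phi_n} a_0\, dx$, independent of $\chi$.

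For (ii), with $s \geq 1$ the support of $a_s$ is non-compact in $x'$, so I would first use Proposition \ref{L_star_l}(iii) to rewrite, for any $l \in \mathbb{N}$,
\[
\int_{\mathbb{R}^n} e^{i\lambda \phi_n(x)} a_s(x) \chi(\varepsilon x)\, dx = \int_{\mathbb{R}^n} e^{i\lambda \phi_n(x)} L_s^{*l}(a_s(x) \chi(\varepsilon x))\, dx.
\]
By Proposition \ref{L_star_l}(iv) the new integrand is bounded in modulus by a constant times $\lambda^{-l} \langle x \rangle^{t}$ with $t = \tau^{+} - (\mu_n-1-\delta^+)l$. Choosing $l \geq l_n = [(\tau^{+}+n)/(\mu_n-1-\delta^{+})]+1$ gives $t < -n$, which makes $\langle x \rangle^{t}$ integrable on $\mathbb{R}^n$; crucially this majorant is uniform in $\varepsilon \in (0,1)$ since Proposition \ref{chi epsilon}(ii) controls derivatives of $\chi(\varepsilon x)$ independently of $\varepsilon$.

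Next I would establish pointwise convergence $L_s^{*l}(a_s(x) \chi(\varepsilon x)) \to L_s^{*l}(a_s(x))$ as $\varepsilon \to +0$: expanding $L_s^{*l}$ via Proposition \ref{d_l_alpha} and applying Leibniz's rule decomposes the expression into one term carrying no derivatives on $\chi(\varepsilon x)$, which converges by Proposition \ref{chi epsilon}(i), plus terms carrying at least one derivative of $\chi(\varepsilon x)$, which vanish pointwise by Proposition \ref{chi epsilon}(iii). Dominated convergence then produces the $\chi$-independent limit
\[
\tilde{I}_{\phi_n}[a_s](\lambda) = \int_{\mathbb{R}^n} e^{i\lambda \phi_n(x)} L_s^{*l}(a_s(x))\, dx,
\]
proving existence of the oscillatory integral and the displayed representation. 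Finally, bounding the right-hand side by its absolute value and integrating the $\varepsilon = 0$ version of the majorant from Proposition \ref{L_star_l}(iv) yields the claimed estimate on $|\tilde{I}_{\phi_n}[a_s](\lambda)|$.

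The main technical obstacle is guaranteeing that the majorant for $L_s^{*l}(a_s \chi(\varepsilon x))$ is simultaneously integrable on $\mathbb{R}^n$ \emph{and} uniform in $\varepsilon$; this is precisely what the combination of Proposition \ref{chi epsilon}(ii) (uniform derivative bounds on $\chi(\varepsilon x)$) with Proposition \ref{L_star_l}(iv) (polynomial decay after $l$ applications of $L_s^{*}$) provides, so the choice of threshold $l_n$ just records the smallest $l$ for which the exponent $t$ drops below $-n$.
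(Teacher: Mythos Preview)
Your proposal is correct and follows essentially the same route as the paper: for (i) dominated convergence on the compactly supported $a_0$, and for (ii) the integration-by-parts identity of Proposition~\ref{L_star_l}(iii) followed by the uniform-in-$\varepsilon$ majorant from Proposition~\ref{L_star_l}(iv) (with $l\ge l_n$ so that $t<-n$) and then Lebesgue's dominated convergence via Proposition~\ref{chi epsilon}. Your explicit treatment of the pointwise convergence $L_s^{*l}(a_s\chi(\varepsilon x))\to L_s^{*l}(a_s)$ by separating the undifferentiated $\chi(\varepsilon x)$ term from those hit by at least one derivative is in fact more detailed than the paper's terse ``applying Lebesgue's convergence theorem with Proposition~\ref{chi epsilon}'', but the underlying argument is the same.
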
 

\begin{proof} 
First we prove (i). 
Applying Lebesgue's convergence theorem to $e^{i\lambda \phi_{n}(x)}$ $a_{0}(x) \chi(\varepsilon x) \in C^{\infty}_{0}(\mathbb{R}^{n})$ with Proposition \ref{chi epsilon} shows (i). 
Next we prove (ii). 
Proposition \ref{L_star_l} (iv) makes the order of integrand descend to be integrable in the sense of Lebesgue: 
For any $l \in \mathbb{N}$ such that $l \geq l_{n}:= [ (\tau^{+}+n)/(\mu_{n}-1-\delta^{+}) ] +1$, 
\begin{align} 
|e^{i\lambda \phi_{n}(x)} L_{s}^{*l}(a_{s}(x) \chi(\varepsilon x))| 
\leq C_{l} \lambda^{-l} \prod_{k=s+1}^{n} ||\varphi_{k}||\hspace{0.05cm}|a|^{(\tau)}_{l} |\psi_{1,s}|^{(l)}_{r} \langle x \rangle^{t} 
=: M(x), 
\notag 
\end{align} 
where $t := \tau^{+} -(\mu_{n}-1-\delta^{+})l < -n$. 
Since $\int_{\mathbb{R}^{n}} M(x) dx$ is absolutely convergent independent of $\chi$ and $\varepsilon$, 
applying Lebsgue's convergence theorem with Proposition \ref{chi epsilon} 
on Proposition \ref{L_star_l} (iii) as $\varepsilon \to +0$, 
completes the proof. 
\end{proof} 

\section{Asymptotic expansions of oscillatory integrals} 

In this section, 
we show asymptotic expansions of oscillatory integrals in multivariable. 
First the following holds. 
\begin{lem} 
\label{split_of_cutoff_function} 
Let $\varphi_{j}$ and $\psi_{j}$ be functions for $j=1,\cdots,n$. 
Then 
\begin{align} 
\prod_{j=1}^{n} (\varphi_{j} + \psi_{j}) 
= \sum_{(j_{1},\dots,j_{n}) \in S_{n}} \sum_{s=0}^{n} \frac{1}{(n-s)! s!} \prod_{k=s+1}^{n} \varphi_{j_{k}} \prod_{k=1}^{s} \psi_{j_{k}}, 
\label{split01} 
\end{align} 
where $S_{n}$ is a symmetric group, $\prod_{k=n+1}^{n} \varphi_{j_{k}} :=1$ and $\prod_{k=1}^{0} \psi_{j_{k}} := 1$. 
\end{lem}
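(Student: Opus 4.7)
The plan is to expand both sides using the distributive law and observe that the coefficient $1/((n-s)!s!)$ on the right-hand side is precisely what is needed to compensate for overcounting by the symmetric group.

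First I would expand the left-hand side directly. Each factor $\varphi_j+\psi_j$ contributes either $\varphi_j$ or $\psi_j$ to a given term, so the distributive law yields
\begin{align}
\prod_{j=1}^{n}(\varphi_{j}+\psi_{j})
= \sum_{T\subseteq\{1,\dots,n\}} \prod_{j\in T}\psi_{j}\prod_{j\notin T}\varphi_{j},
\notag
\end{align}
a sum of $2^{n}$ products indexed by subsets $T$ of $\{1,\dots,n\}$.

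Next I would rewrite the right-hand side by grouping permutations that produce the same product. For fixed $s\in\{0,\dots,n\}$ and a fixed permutation $(j_{1},\dots,j_{n})\in S_{n}$, since the $\varphi_{j}$ and $\psi_{j}$ are scalar functions, the product $\prod_{k=s+1}^{n}\varphi_{j_{k}}\prod_{k=1}^{s}\psi_{j_{k}}$ depends only on the partition of $\{1,\dots,n\}$ into the set $T:=\{j_{1},\dots,j_{s}\}$ and its complement $T^{c}=\{j_{s+1},\dots,j_{n}\}$. For a fixed subset $T$ of size $s$, the number of permutations $(j_{1},\dots,j_{n})\in S_{n}$ with $\{j_{1},\dots,j_{s}\}=T$ (in any order) and $\{j_{s+1},\dots,j_{n}\}=T^{c}$ (in any order) equals $s!\,(n-s)!$. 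Therefore
\begin{align}
\sum_{(j_{1},\dots,j_{n})\in S_{n}}\frac{1}{(n-s)!\,s!}\prod_{k=s+1}^{n}\varphi_{j_{k}}\prod_{k=1}^{s}\psi_{j_{k}}
= \sum_{\substack{T\subseteq\{1,\dots,n\}\\ |T|=s}}\prod_{j\in T}\psi_{j}\prod_{j\notin T}\varphi_{j}.
\notag
\end{align}

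Finally, summing over $s=0,\dots,n$ recovers the sum over all subsets $T\subseteq\{1,\dots,n\}$, matching the expansion obtained for the left-hand side. The only thing to verify carefully is the combinatorial counting in the previous step, which is the single step where the normalization $1/((n-s)!s!)$ plays its role; apart from this bookkeeping, no obstacle arises, since the identity is purely algebraic and relies only on the commutativity of the scalar-valued factors $\varphi_{j}$ and $\psi_{j}$.
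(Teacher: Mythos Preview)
Your argument is correct. The paper, however, does not prove the lemma this way: it simply says ``By induction on $n \in \mathbb{N}$.'' and gives no further details. Your approach is a direct combinatorial bijection --- expand the left-hand side over subsets $T \subseteq \{1,\dots,n\}$, then show that for each $s$ the sum over $S_{n}$ collapses to the sum over subsets of size $s$ because exactly $s!(n-s)!$ permutations yield the same unordered partition $(T,T^{c})$. This has the advantage of making the normalization $1/((n-s)!s!)$ transparent rather than emerging from an inductive calculation, and it requires no inductive hypothesis to track. The inductive route the paper alludes to would presumably split off the $n$th factor $(\varphi_{n}+\psi_{n})$ and reindex, which is routine but less illuminating. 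Either way the lemma is elementary; your version is the more self-contained of the two.
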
 

\begin{proof} 
By induction on $n \in \mathbb{N}$. 
\end{proof} 

Next by Lemma \ref{split_of_cutoff_function}, Theorem \ref{th02} in $\S 2$ and Theorem \ref{Lax02} in $\S 3$, 
we obtain the following theorem: 
\begin{thm} 
\label{multivariable} 
Assume that $\lambda > 0$, $\phi(x):=\sum_{j=1}^{n} \pm_{j} x_{j}^{m_{j}}$ 
where $m_{j} \in \mathbb{N}$ such that $m_{1} \geq \cdots \geq m_{n} \geq 2$ and $\pm_{j}$ stands for ``$+$" or ``$-$" determined by $j$, 
and $a \in \mathcal{A}^{\tau}_{\delta}(\mathbb{R}^{n})$. 
Then for any $N_{1} \in \mathbb{N}$ such that $N_{1} > m_{1}$, as $\lambda \to \infty$, 
\begin{align} 
Os\text{-} \int_{\mathbb{R}^{n}} e^{i\lambda \phi(x)} a(x) dx 
&= \sum_{\alpha \in \Omega} c_{\alpha} \lambda^{-\sum_{j=1}^{n} \frac{\alpha_{j}+1}{m_{j}}} + O \left( \lambda^{-\frac{N_{1}-m_{1}+1}{m_{1}}} \right) 
\label{tilde_I_phi_a_lambda} 
\end{align} 
where 
\begin{align} 
\Omega 
:= \bigg\{ (\alpha_{j}) \in \mathbb{Z}_{\geq 0}^{n} \bigg| \sum_{j=1}^{n} \frac{\alpha_{j}+(1-\delta_{1j})}{m_{j}} < \frac{N_{1}}{m_{1}}-1 \bigg\} 
\label{Omega_k} 
\end{align} 
and 
\begin{align} 
c_{\alpha} 
:&= \prod_{j=1}^{n} c^{\pm_{j}}_{j,\alpha_{j}} \frac{\partial_{x}^{\alpha} a(0)}{\alpha!} 
\label{c_alpha} 
\end{align} 
with 
\begin{align} 
&c^{\pm_{j}}_{j,\alpha_{j}} 
= m_{j}^{-1} \left\{ e^{\pm_{j} i\frac{\pi}{2} \frac{\alpha_{j}+1}{m_{j}}} + (-1)^{\alpha_{j}} e^{\pm_{j} (-1)^{m_{j}} i\frac{\pi}{2} \frac{\alpha_{j}+1}{m_{j}}} \right\} \varGamma \left( \frac{\alpha_{j}+1}{m_{j}} \right) \notag \\ 
&= 
\begin{cases} 
\dfrac{1}{k_{j}} e^{\pm_{j} i \frac{\pi}{2} \frac{2\beta_{j}+1}{2k_{j}}} \varGamma \left( \dfrac{2\beta_{j}+1}{2k_{j}} \right) & \text{for $m_{j}=2k_{j}$, $\alpha_{j}=2\beta_{j}$}, \\ 
\hspace{1.75cm}0 & \text{for $m_{j}=2k_{j}$, $\alpha_{j}=2\beta_{j}+1$}, \\ 
\dfrac{2}{2k_{j}+1} \cos \dfrac{\pi(2\beta_{j}+1)}{2(2k_{j}+1)} \varGamma \left( \dfrac{2\beta_{j}+1}{2k_{j}+1} \right) & \text{for $m_{j}=2k_{j}+1$, $\alpha_{j}=2\beta_{j}$}, \\ 
\dfrac{\pm_{j} 2i}{2k_{j}+1} \sin \dfrac{\pi(\beta_{j}+1)}{2k_{j}+1} \varGamma \left( \dfrac{2\beta_{j}+2}{2k_{j}+1} \right) & \text{for $m_{j}=2k_{j}+1$, $\alpha_{j}=2\beta_{j}+1$}, 
\end{cases} 
\label{c_pm_j_j_alpha_j} 
\end{align} 
where $k_{j} \in \mathbb{N}$ and $\beta_{j} \in \mathbb{Z}_{\geq 0}$ for $j=1,\dots,n$. 
\end{thm}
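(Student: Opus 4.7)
The plan is to reduce the $n$-dimensional result to the one-variable Theorem \ref{th02} by a cutoff decomposition, using Theorem \ref{Lax02} to discard the pieces supported away from the origin and Fubini together with iterated one-dimensional expansions on the piece supported near the origin. First, I apply Lemma \ref{split_of_cutoff_function} to $\prod_{j=1}^{n}(\varphi_j(x_j)+\psi_j(x_j))=1$ to write $a = \sum_{s=0}^{n}\sum_{\sigma \in S_n}\frac{1}{(n-s)!\,s!}\,a_{s,\sigma}$, where $a_{s,\sigma}$ is of the form \eqref{a_s_def} after the relabeling induced by $\sigma$. For every $s \geq 1$ at least one factor $\psi_{j_k}$ appears, so after a permutation of variables Theorem \ref{Lax02}(ii) applies and shows $\tilde{I}_{\phi_n}[a_{s,\sigma}](\lambda) = O(\lambda^{-l})$ for every $l$; choosing $l$ with $l \geq (N_1-m_1+1)/m_1$ absorbs all such pieces into the remainder of \eqref{tilde_I_phi_a_lambda}.

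For the remaining piece $a_0(x) = a(x)\prod_{k=1}^{n}\varphi_k(x_k)$, the support is contained in the cube $[-r,r]^n$, so the oscillatory integral coincides with an ordinary Lebesgue integral. Choose auxiliary integers $N_2,\dots,N_n$ with $N_j > m_j$ and $(N_j-m_j+1)/m_j \geq (N_1-m_1+1)/m_1$ for every $j$ (for instance $N_j := \lceil m_j(N_1-m_1+1)/m_1\rceil + m_j - 1$). Using Fubini, integrate first in $x_n$ and apply Theorem \ref{th02} in that variable (treating $x' := (x_1,\dots,x_{n-1})$ as parameters), obtaining a principal sum over $0 \leq \alpha_n \leq N_n-m_n-1$ with coefficients $c^{\pm_n}_{n,\alpha_n}\,\partial_{x_n}^{\alpha_n}a_0(x',0)/\alpha_n!$ and a remainder that is $O(\lambda^{-(N_n-m_n+1)/m_n})$ uniformly in $x' \in [-r,r]^{n-1}$ because $a_0$ has compact support. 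Each principal factor $\partial_{x_n}^{\alpha_n}a_0(x',0)/\alpha_n!$ is itself smooth and compactly supported in $x'$, so the same procedure applies to the $x_{n-1}$-integral, and so on through $x_1$. After $n$ iterations the principal part becomes $\sum c^{\pm_1}_{1,\alpha_1}\cdots c^{\pm_n}_{n,\alpha_n}\,\partial^\alpha a(0)/\alpha!\,\lambda^{-\sum_j(\alpha_j+1)/m_j}$, where $\varphi_k \equiv 1$ near the origin has been used to identify $\partial^\alpha a_0(0)$ with $\partial^\alpha a(0)$. Retaining only the multi-indices $\alpha$ with $\sum_j(\alpha_j+1)/m_j < (N_1-m_1+1)/m_1$, which is exactly the set $\Omega$ by the elementary identity $\sum_j(\alpha_j+1)/m_j - 1/m_1 = \alpha_1/m_1 + \sum_{j\geq 2}(\alpha_j+1)/m_j$, gives the stated sum over $\Omega$; all other principal terms are of size $O(\lambda^{-(N_1-m_1+1)/m_1})$ and are absorbed into the error. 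The case-by-case simplification \eqref{c_pm_j_j_alpha_j} of $c^{\pm_j}_{j,\alpha_j}$ follows from the definition \eqref{c_k^pm} by splitting on the parities of $m_j$ and $\alpha_j$ and using $e^{i\theta}\pm e^{-i\theta} \in \{2\cos\theta, 2i\sin\theta\}$.

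The main obstacle is the bookkeeping in the iterated expansion: at each stage the remainder depends on the outer variables, and after multiplying by the remaining oscillatory factor $e^{i\lambda \sum_{j<k}\pm_j x_j^{m_j}}$ and integrating one must verify that no power of $\lambda$ is gained back. This is handled by the compact support of $a_0$ (so the outer integrals are bounded by constants independent of $\lambda$) together with the choice $(N_j-m_j+1)/m_j \geq (N_1-m_1+1)/m_1$, which guarantees that every remainder generated during the iteration is dominated by $\lambda^{-(N_1-m_1+1)/m_1}$. The analogous uniformity for the $s \geq 1$ pieces is exactly what Theorem \ref{Lax02}(ii) provides, so these two estimates together close the argument.
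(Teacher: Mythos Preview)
Your proof is correct and follows essentially the same approach as the paper: decompose via Lemma~\ref{split_of_cutoff_function}, discard the $s\geq 1$ pieces by Theorem~\ref{Lax02}(ii), and on the compactly supported piece $a_0$ iterate the one-variable expansion of Theorem~\ref{th02} with Fubini, choosing auxiliary $N_j$ so that every intermediate remainder is absorbed into $O(\lambda^{-(N_1-m_1+1)/m_1})$. The only cosmetic differences are that the paper integrates in the order $x_1,\dots,x_n$ (rather than $x_n,\dots,x_1$) and takes $N_j\geq N_1$, using the accumulated factors $\lambda^{-\sum_{k<j}(\alpha_k+1)/m_k}$ to obtain the slightly weaker sufficient condition \eqref{N_k_condition}; your stronger choice $(N_j-m_j+1)/m_j\geq (N_1-m_1+1)/m_1$ works equally well.
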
 

\begin{proof} 
First we divide $\int_{\mathbb{R}^{n}} e^{i\lambda \phi(x)} a(x) \chi (\varepsilon x) dx$ into an integral on compact support around the singular point of $\phi$ and integrals on non-compact support: 
After using cutoff function $\varphi_{j}$ for $j=1,\dots,n$ with Lemma \ref{split_of_cutoff_function}, by proper permutations of variables in each term, without lost of generality, we can assume that $\sum_{s=0}^{n} \frac{n!}{(n-s)! s!} \int_{\mathbb{R}^{n}} e^{i\lambda \tilde{\phi}(x)} a_{s}(x) \chi (\varepsilon x) dx$ with \eqref{a_s_def}. 
If $s = 0$, since $a_{0} \in C^{\infty}_{0}(\mathbb{R}^{n})$, then by Theorem \ref{Lax02} (i) and Theorem \ref{th02}, for any $N_{1} \in \mathbb{N}$ such that $N_{1}>m_{1}$, as $\lambda \to \infty$, 
\begin{align} 
&I_{0} 
:= Os\text{-}\int_{\mathbb{R}^{n}} e^{i\lambda \phi(x)} a_{0}(x) dx 
= \int_{\mathbb{R}^{n}} e^{i\lambda \phi(x)} a_{0}(x) 
= \prod_{j=1}^{n} \int_{-\infty}^{\infty} dx_{j} e^{\pm_{j}i\lambda x_{j}^{m_{j}}} a_{0}(x) \notag \\ 
&= \sum_{\alpha_{1}=0}^{N_{1}-m_{1}-1} \prod_{j=2}^{n} \int_{-\infty}^{\infty} dx_{j} e^{\pm_{j}i\lambda x_{j}^{m_{j}}} c_{1,\alpha_{1}}^{\pm_{1}} \lambda^{-\frac{\alpha_{1}+1}{m_{1}}} \frac{\partial_{x_{1}}^{\alpha_{1}}}{\alpha_{1}!} \bigg|_{x_{1}=0} a_{0}(x) + O \left( \lambda^{-\frac{N_{1}-m_{1}+1}{m_{1}}} \right), 
\notag 
\end{align} 
where $c_{1,\alpha_{1}}^{\pm_{1}}$ is given by \eqref{c_pm_j_j_alpha_j}. 
Here for any $j=2,\dots,n$, let $N_{j} \in \mathbb{N}$ such that $N_{j} \geq N_{1}$, 
then $N_{j} > m_{j}$, and for any $\lambda \geq 1$, 
\begin{align} 
\frac{N_{j}}{m_{j}} 
> \frac{N_{1}}{m_{1}}-\sum_{k=2}^{j} \frac{1}{m_{k}} 
\Longleftrightarrow 
\lambda^{-\sum_{k=1}^{j-1} \frac{1}{m_{k}} -\frac{N_{j}-m_{j}+1}{m_{j}}} \leq \lambda^{-\frac{N_{1}-m_{1}+1}{m_{1}}}. 
\label{N_k_condition} 
\end{align} 
Repeating to apply Theorem \ref{th02} with \eqref{N_k_condition} gives inductively
\begin{align} 
&I_{0} 
= \sum_{\alpha_{1}=0}^{N_{1}-m_{1}-1} \cdots \sum_{\alpha_{n}=0}^{N_{n}-m_{n}-1} c_{\alpha} \lambda^{-\sum_{j=1}^{n} \frac{\alpha_{j}+1}{m_{j}}} + O \left( \lambda^{-\frac{N_{1}-m_{1}+1}{m_{1}}} \right)~(\lambda \to \infty), 
\notag 
\end{align} 
where $c_{\alpha}$ is given by \eqref{c_alpha}. 
Let $\Omega$ be defined by \eqref{Omega_k}. 
Then 
\begin{gather} 
\frac{\alpha_{j}+(1-\delta_{1j})}{m_{j}} 
< \sum_{j=1}^{n} \frac{\alpha_{j}+(1-\delta_{1j})}{m_{j}} < \frac{N_{1}}{m_{1}}-1 \leq \frac{N_{j}}{m_{j}}-1 \notag \\ 
\Longrightarrow \alpha_{j} \leq N_{j}-m_{j}-1, 
\notag 
\end{gather} 
for $j=1.\dots,n$, and for any $\lambda \geq 1$, 
\begin{align} 
&\sum_{j=1}^{n} \frac{\alpha_{j}+(1-\delta_{1j})}{m_{j}} < \frac{N_{1}}{m_{1}}-1 
\Longleftrightarrow \lambda^{-\sum_{j=1}^{n} \frac{\alpha_{j}+1}{m_{j}}} > \lambda^{-\frac{N_{1}-m_{1}+1}{m_{1}}}. 
\notag 
\end{align} 
Hence $\lambda \to \infty$, 
\begin{align} 
I_{0} 
:= Os\text{-}\int_{\mathbb{R}^{n}} e^{i\lambda \phi(x)} a_{0}(x) dx 
= \sum_{\alpha \in \Omega} c_{\alpha} \lambda^{-\sum_{j=1}^{n} \frac{\alpha_{j}+1}{m_{j}}} + O \left( \lambda^{-\frac{N_{1}-m_{1}+1}{m_{1}}} \right). 
\notag 
\end{align} 

If $s \ne 0$, since $a_{s}$ has a non-compact support, then Theorem \ref{Lax02} (ii) gives 
\begin{align} 
\sum_{s=1}^{n} \frac{n!}{(n-s)! s!} Os\text{-}\int_{\mathbb{R}^{n}} e^{i\lambda \phi (x)} a_{s}(x) dx 
= O \left( \lambda^{-\frac{N_{1}-m_{1}+1}{m_{1}}} \right)~(\lambda \to \infty). 
\notag 
\end{align} 
\end{proof} 

By Theorem \ref{multivariable}, we obtain the following immdeiately. 
\begin{cor} 
\label{m_j=m} 
Assume that $\lambda > 0$ and $a \in \mathcal{A}^{\tau}_{\delta}(\mathbb{R}^{n})$. 
Then the following hold: 
\begin{enumerate} 
\item[(i)] 
Let $m \in \mathbb{N}$ such that $m \geq 2$. 
Then for any $N \in \mathbb{N}$ such that $N > m$, as $\lambda \to \infty$, 
\begin{align*} 
Os\text{-} \int_{\mathbb{R}^{n}} e^{i\lambda \sum_{j=1}^{n} \pm_{j} x_{j}^{m}} a(x) dx 
&= \sum_{\alpha \in \Omega} c_{\alpha} \lambda^{-\frac{|\alpha|+n}{m}} + O \left( \lambda^{-\frac{N-m+1}{m}} \right), 
\notag 
\end{align*} 
where $\Omega$ and $c_{\alpha}$ are defined by \eqref{Omega_k} when $N_{1}=N$, and \eqref{c_alpha} with 
\begin{align*} 
c^{\pm_{j}}_{j,\alpha_{j}} 
&= m^{-1} \left\{ e^{\pm_{j} i\frac{\pi}{2} \frac{\alpha_{j}+1}{m}} + (-1)^{\alpha_{j}} e^{\pm_{j} (-1)^{m} i\frac{\pi}{2} \frac{\alpha_{j}+1}{m}} \right\} \varGamma \left( \frac{\alpha_{j}+1}{m} \right). 
\end{align*} 
\item[(ii)] 
For any $N \in \mathbb{N}$ such that $N > 2$, as $\lambda \to \infty$, 
\begin{align*} 
&\int_{\mathbb{R}^{n}} e^{i\lambda \left( \sum_{j=1}^{p} x_{j}^{2} - \sum_{j=p+1}^{n} x_{j}^{2} \right)} a(x) dx \notag \\ 
&= \pi^{\frac{n}{2}} e^{i\frac{\pi}{4} \{ p-(n-p) \}} \sum_{\beta \in \Omega'} \frac{(-1)^{\sum_{j=p+1}^{n}} i^{|\beta|} \partial_{x}^{2\beta} a(0)}{4^{|\beta|} \beta!} \lambda^{-|\beta|-\frac{n}{2}} + O \left( \lambda^{-\frac{N}{2}+\frac{1}{2}} \right),  
\notag 
\end{align*} 
where $\Omega' = \{ (\beta_{j}) \in \mathbb{Z}_{\geq 0}^{n} |~|\beta| < (N-n-1)/2 \}$. 
\end{enumerate} 
\end{cor}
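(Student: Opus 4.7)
\smallskip
\noindent\textbf{Proof plan.} Both parts of the corollary are direct specializations of Theorem \ref{multivariable}, so the plan is to take that theorem off the shelf and carry out the substitutions.

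For part (i), I would set $m_{j}=m$ for every $j=1,\dots,n$ in Theorem \ref{multivariable}. The ordering hypothesis $m_{1}\geq\cdots\geq m_{n}\geq 2$ reduces to the single condition $m\geq 2$, which is given. With $N_{1}=N$, the error term $O(\lambda^{-(N_{1}-m_{1}+1)/m_{1}})$ becomes $O(\lambda^{-(N-m+1)/m})$, and the principal exponent collapses as
\begin{align}
\sum_{j=1}^{n}\frac{\alpha_{j}+1}{m_{j}}=\frac{|\alpha|+n}{m}.
\notag
\end{align}
The index set $\Omega$ is precisely \eqref{Omega_k} with $N_{1}=N$, and the coefficient factors $c^{\pm_{j}}_{j,\alpha_{j}}$ reduce to the formula displayed in (i). Hence the statement of (i) is just a transcription of Theorem \ref{multivariable} under the substitution $m_{j}\equiv m$, with no additional analysis required.

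For part (ii), I would apply part (i) with $m=2$ and the sign pattern $\pm_{j}=+$ for $j\leq p$, $\pm_{j}=-$ for $j>p$. The substantive task is the algebraic simplification of the coefficients. Since $(-1)^{m_{j}}=1$ when $m_{j}=2$, the bracket in the formula for $c^{\pm_{j}}_{j,\alpha_{j}}$ factors as $\{1+(-1)^{\alpha_{j}}\}e^{\pm_{j}i\pi(\alpha_{j}+1)/4}$, which annihilates every odd $\alpha_{j}$. Writing $\alpha_{j}=2\beta_{j}$ for the surviving even indices and using $\Gamma(\beta_{j}+1/2)=(2\beta_{j})!\sqrt{\pi}/(4^{\beta_{j}}\beta_{j}!)$, I obtain
\begin{align}
c^{\pm_{j}}_{j,2\beta_{j}}=e^{\pm_{j}i\pi/4}(\pm_{j} i)^{\beta_{j}}\frac{(2\beta_{j})!\sqrt{\pi}}{4^{\beta_{j}}\beta_{j}!}.
\notag
\end{align}
Taking the product over $j$ then collects $\prod_{j}e^{\pm_{j}i\pi/4}=e^{i\pi(p-(n-p))/4}$, $\prod_{j}(\pm_{j}i)^{\beta_{j}}=i^{|\beta|}(-1)^{\sum_{j=p+1}^{n}\beta_{j}}$, and the $\Gamma$-factors combine with $\partial_{x}^{2\beta}a(0)/(2\beta)!$ to give the stated summand. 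The reindexing $\alpha\mapsto\beta$ turns the condition $\sum_{j}(\alpha_{j}+(1-\delta_{1j}))/2<N/2-1$ into $|\beta|+(n-1)/2<N/2-1$, i.e.\ $|\beta|<(N-n-1)/2$, which is exactly $\Omega'$. The error exponent becomes $-(N-2+1)/2=-N/2+1/2$.

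The only place requiring genuine care is the exponential/sign bookkeeping in (ii): splitting the factor $e^{\pm_{j}i\pi(2\beta_{j}+1)/4}$ cleanly into a $j$-independent piece (producing the global phase $e^{i\pi(p-(n-p))/4}$) and a $\beta_{j}$-dependent piece (producing $i^{|\beta|}$ together with the sign $(-1)^{\sum_{j>p}\beta_{j}}$) must be done consistently with the $\pm_{j}$ convention. Once this is verified, part (ii) follows immediately and can be cross-checked against Theorem \ref{stationary example}, since for $a\in\mathcal{S}(\mathbb{R}^{n})\subset\mathcal{A}^{\tau}_{\delta}(\mathbb{R}^{n})$ the two expansions must coincide. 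No analytic obstacle arises beyond what was already established in Theorem \ref{multivariable}.
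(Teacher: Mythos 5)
Your proposal is correct and follows essentially the same route as the paper: part (i) is Theorem \ref{multivariable} with $m_{j}=m$ and $N_{1}=N$, and part (ii) is (i) with $m=2$, $\pm_{j}=+$ for $j\leq p$, $\pm_{j}=-$ for $j>p$, followed by the coefficient simplification via \eqref{c_pm_j_j_alpha_j}. The paper states this in one line while you carry out the Gamma-function and phase bookkeeping explicitly; your computation is right, and in passing it confirms that the factor $(-1)^{\sum_{j=p+1}^{n}}$ in the printed statement should read $(-1)^{\sum_{j=p+1}^{n}\beta_{j}}$, consistent with Theorem \ref{stationary example}.
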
 

\begin{proof} 
(i) Set $m_{j}=m \geq 2$ for $j=1,\dots,n$ on Theorem \ref{multivariable}. 
(ii) Set $m=2$, $\pm_{j}=+$ for $j=1,\dots,p$ and $\pm_{j}=-$ for $j=p+1,\dots,n$ on (i), and apply \eqref{c_pm_j_j_alpha_j}.   
\end{proof} 

By Theorem \ref{stationary example} in \S 2 and Corollary \ref{m_j=m}, 
we can consider that Theorem \ref{multivariable} is an extension of the stationary phase method. 

\begin{cor} 
\label{A_E_6_E_8} 
Assume that $\lambda > 0$ and $a \in \mathcal{A}^{\tau}_{\delta}(\mathbb{R}^{3})$. 
Then the following hold: 
\begin{enumerate} 
\item[(i)] 
For any $N \in \mathbb{N}$ such that $N>k+1$, as $\lambda \to \infty$, 
\begin{align} 
&Os\text{-} \int_{\mathbb{R}^{3}} e^{i\lambda (\pm x_{1}^{k+1}+x_{2}^{2}+x_{3}^{2})} a(x) dx 
= \sum_{\alpha \in \Omega_{1}} c_{\alpha} \lambda^{-\frac{\alpha_{1}+1}{k+1}-\frac{\alpha_{2}+1}{2}-\frac{\alpha_{3}+1}{2}} + O \left( \lambda^{-\frac{N-k}{k+1}} \right), 
\notag 
\end{align} 
\item[(ii)] 
For any $N \in \mathbb{N}$ such that $N>4$, as $\lambda \to \infty$, 
\begin{align} 
&Os\text{-} \int_{\mathbb{R}^{3}} e^{i\lambda (\pm x_{1}^{4}+x_{2}^{3}+x_{3}^{2})} a(x) dx 
= \sum_{\alpha \in \Omega_{2}} c_{\alpha} \lambda^{-\frac{\alpha_{1}+1}{4}-\frac{\alpha_{2}+1}{3}-\frac{\alpha_{3}+1}{2}} + O \left( \lambda^{-\frac{N-3}{4}} \right), 
\notag 
\end{align} 
\item[(iii)] 
For any $N \in \mathbb{N}$ such that $N>5$, as $\lambda \to \infty$, 
\begin{align} 
&Os\text{-} \int_{\mathbb{R}^{3}} e^{i\lambda (x_{1}^{5}+x_{2}^{3}+x_{3}^{2})} a(x) dx 
= \sum_{\alpha \in \Omega_{3}} c_{\alpha} \lambda^{-\frac{\alpha_{1}+1}{5}-\frac{\alpha_{2}+1}{3}-\frac{\alpha_{3}+1}{2}} + O \left( \lambda^{-\frac{N-4}{5}} \right), 
\notag 
\end{align} 
\end{enumerate} 
where $\Omega_{j}$ for $j=1,2,3$, and $c_{\alpha}$ with $c^{\pm}_{j,\alpha_{j}}$ are defined by \eqref{Omega_k} when $N_{1}=N$, and \eqref{c_alpha} with \eqref{c_pm_j_j_alpha_j}. 
\end{cor}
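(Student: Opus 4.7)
The plan is to obtain each of the three cases as a direct specialization of Theorem \ref{multivariable}, since the phase functions in the statement are already in the standard form $\phi(x) = \sum_{j=1}^{3} \pm_{j} x_{j}^{m_{j}}$ required by that theorem. The only verification needed is that the ordering condition $m_{1} \geq m_{2} \geq m_{3} \geq 2$ holds in each case and that the specific error exponents reduce to those stated.

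For case (i), the $A_{k}$-type phase, I would take $n=3$, $m_{1}=k+1$, $m_{2}=m_{3}=2$, $\pm_{1} = \pm$, and $\pm_{2}=\pm_{3}=+$. Since $k \in \mathbb{N}$ forces $k+1 \geq 2$, the hypothesis $m_{1} \geq m_{2} \geq m_{3} \geq 2$ is satisfied, and Theorem \ref{multivariable} applies for every $N_{1}=N > k+1$. The remainder $O(\lambda^{-(N_{1}-m_{1}+1)/m_{1}})$ from \eqref{tilde_I_phi_a_lambda} then specializes to $O(\lambda^{-(N-k)/(k+1)})$, exactly as claimed; the coefficients $c_{\alpha}$ and the index set $\Omega_{1}$ are read off directly from \eqref{c_alpha}, \eqref{c_pm_j_j_alpha_j}, and \eqref{Omega_k} with these values of $m_{j}$ substituted in.

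For cases (ii) and (iii), the procedure is identical. In the $E_{6}$ case I would take $(m_{1},m_{2},m_{3})=(4,3,2)$ with $\pm_{1}=\pm$ and $\pm_{2}=\pm_{3}=+$, which gives ordering $4 \geq 3 \geq 2$ and a residual exponent $(N-4+1)/4 = (N-3)/4$. In the $E_{8}$ case I would take $(m_{1},m_{2},m_{3})=(5,3,2)$ with all $\pm_{j}=+$, yielding $5 \geq 3 \geq 2$ and residual exponent $(N-5+1)/5 = (N-4)/5$. In both cases the coefficients and the index sets $\Omega_{2}$, $\Omega_{3}$ are the instantiations of \eqref{c_alpha} and \eqref{Omega_k} provided by Theorem \ref{multivariable}.

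There is essentially no genuine obstacle here beyond careful bookkeeping: each of the three phase functions satisfies the hypotheses of Theorem \ref{multivariable} once the $m_{j}$ are listed in decreasing order, and the stated error exponents are arithmetic simplifications of $(N-m_{1}+1)/m_{1}$. The mild point requiring attention is that in all three cases one must assign the variable with the largest exponent to the role of $x_{1}$ in the theorem so that the hypothesis $m_{1} \geq \cdots \geq m_{n}$ is satisfied; this is automatic here since the phase functions are presented with $m_{1} = k+1,4,5$ respectively, which dominates $m_{2}=2,3,3$ and $m_{3}=2,2,2$.
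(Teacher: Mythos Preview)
Your proposal is correct and follows exactly the same approach as the paper: both obtain each case by specializing Theorem~\ref{multivariable} with $n=3$ and the appropriate choices of $(m_{1},m_{2},m_{3};\pm_{1},\pm_{2},\pm_{3})$, namely $(k+1,2,2;\pm,+,+)$, $(4,3,2;\pm,+,+)$, and $(5,3,2;+,+,+)$. Your write-up is in fact more explicit than the paper's one-line proof in checking the ordering hypothesis and the arithmetic of the error exponents, but the content is identical.
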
 

\begin{proof} 
Set $n=3$ and $(m_{1},m_{2},m_{3};\pm_{1},\pm_{2},\pm_{3})=(k+1,2,2;\pm,+,+)$\\$=(4,3,2;\pm,+,+)=(5,3,2;+,+,+)$ on Theorem \ref{multivariable} respectively. 
\end{proof}


\begin{thebibliography}{99}

\bibitem{Arnold01}
\textsc{V.I.~Arnold},
\textit{Normal forms of functions with simple singular points, the Weyl groups $A_{k}$, $D_{k}$, $E_{k}$, and Lagrange immersions},
Functional Anal. Appl. 254-272, 
(Russian original, Funkc. anal. i prilo\v{z}. \textbf{6}-4 (1972),3--25).

\bibitem{AGV}
\textsc{V.I.~Arnold, S.M.~Gusein-Zade and A.N.~Var\v{c}enko},
\textit{Singularities of differentiable maps volume I,II},
Birkh\"{a}user,
I, 1985, II, 1988. 

\bibitem{Duistermaat02}
\textsc{J.J.~Duistermaat},
\textit{Oscillatory integrals, Lagrange immersions and unfoldings of singularities},
Comm. Pure Appl. Math., 
\textbf{27} (1974), 
207--281. 

\bibitem{Duistermaat01}
\textsc{J.J.~Duistermaat},
\textit{Fourier integral operators},
Birkh\"{a}user, Boston. 
1996. 

\bibitem{Fujiwara3} 
\textsc{D.~Fujiwara}, 
\textit{Rigorous time slicing approach to Feynman path integrals},
Springer,
2017.

\bibitem{Fujiwara1}
\textsc{D.~Fujiwara},
\textit{Asymptotic method in linear partial differential equations},
Iwanami Shoten,
2019 (in Japanese). 

\bibitem{Hormander01}
\textsc{L.~H\"{o}rmander},
\textit{Fourier integral operators I},
Acta Math., 
\textbf{127} (1971), 
79--183. 

\bibitem{Hormander02}
\textsc{L.~H\"{o}rmander},
\textit{The analysis of linear partial differential operators I},
Springer,
1983. 

\bibitem{Kumano-go}
\textsc{H.~Kumano-go},
\textit{Pseudo-differential operators},
MIT Press,
1981.

\bibitem{Nagano-Miyazaki02}
\textsc{T.~Nagano and N.~Miyazaki},
\textit{Generalized Fresnel integrals as oscillatory integrals with positive real power phase functions and applications to asymptotic expansions}, 
arXiv:2005.12754v1 [math.CA], 
24 May 2020. 

\bibitem{Nagano-Miyazaki03}
\textsc{T.~Nagano and N.~Miyazaki},
\textit{Asymptotic expansions of oscillatory integrals for certain phase functions with a degenerate singular point including type $A_{k}$, $E_6$, $E_8$-function germs}, 
in preparation. 

\end{thebibliography}
\end{document}